\newcommand\hl{\bgroup\markoverwith
	{\textcolor[rgb]{0.8,0.8,0.8}{\rule[-.5ex]{1pt}{2.5ex}}}\ULon}
\newtheorem{theo}{Theorem}
\newtheorem{defi}{Definition}
\newtheorem{lem}{Lemma}
\newtheorem{rem}{Remark}
\newtheorem{exam}{Example}
\newtheorem{prop}{Proposition}
\newtheorem{prob}{Problem}
\begin{document}

\begin{frontmatter}

\title{A Barzilai-Borwein Descent Method for Multiobjective Optimization Problems}
%\tnotetext[mytitlenote]{Fully documented templates are available in the elsarticle package on \href{http://www.ctan.org/tex-archive/macros/latex/contrib/elsarticle}{CTAN}.}

%% Group authors per affiliation:
%\author{Wang Chen\textsuperscript{a}, Jian Chen\textsuperscript{b}, Weitian Wu\textsuperscript{a}, Xinmin Yang\textsuperscript{c}, Hui Li\textsuperscript{d}}

%\author{Elsevier\fnref{myfootnote}}
%\address{Radarweg 29, Amsterdam}
%\fntext[myfootnote]{Since 1880.}

%% or include affiliations in footnotes:
\author[mymainaddress]{Jian Chen}
\author[mysecondaryaddress]{Liping Tang}
\author[mysecondaryaddress,mythirdaryaddress]{Xinmin Yang\corref{mycorrespondingauthor}}
\cortext[mycorrespondingauthor]{Corresponding author.\\ \indent \indent  Email addresses: \href{mailto:chenjian_math@163.com}{chenjian\_math@163.com} (Jian Chen), \href{mailto:tanglipings@163.com}{tanglipings@163.com} (Liping Tang), \href{mailto:xmyang@cqnu.edu.cn}{xmyang@cqnu.edu.cn} (Xinmin Yang)}

\address[mymainaddress]{Department of Mathematics, Shanghai University,        Shanghai 200444, China}
\address[mysecondaryaddress]{National Center for Applied Mathematics in Chongqing, Chongqing 401331, China}
\address[mythirdaryaddress]{School of Mathematical Sciences, Chongqing Normal University, Chongqing 401331, China}
\begin{abstract}
The steepest descent method proposed by Fliege et al. motivates the research on descent methods for multiobjective optimization, which has received increasing attention in recent years. However, empirical results show that the Armijo line search often gives a very small stepsize along the steepest direction, which decelerates the convergence seriously. This paper points out the issue is mainly due to the imbalances among objective functions. To address this issue, we propose a Barzilai-Borwein descent method for multiobjective optimization (BBDMO) that dynamically tunes gradient magnitudes using Barzilai-Borwein's rule in direction-finding subproblem. With monotone and nonmonotone line search techniques, it is proved that accumulation points generated by BBDMO are Pareto critical points, respectively. Furthermore, theoretical results indicate the Armijo line search can achieve a better stepsize in BBDMO. Finally, comparative results of numerical experiments are reported to illustrate the efficiency of BBDMO and verify the theoretical results.
\end{abstract}

\begin{keyword}
Multiple objective programming\sep Imbalanced objective functions \sep Barzilai-Borwein's rule \sep Pareto critical \sep Convergence
\MSC[2010] 90C29\sep  90C30
\end{keyword}

\end{frontmatter}

%\linenumbers

%%%%%%%%%%%%%%%%%%%%%%%%%%%%%%%%%%%%%%%%%%%%%%%%%%%%
\section{Introduction}
An unconstrained multiobjective optimization problem can be stated as follows:
\begin{align*}
	\min\limits_{x\in\mathbb{R}^{n}} F(x), \tag{MOP}\label{MOP}
\end{align*}
where $F:\mathbb{R}^{n}\rightarrow\mathbb{R}^{m}$ is a continuously differentiable function. In multiobjective optimization, there may not be a solution that simultaneously reaches the optima for all objectives, the concept of optimality is then replaced by \textit{Pareto optimality} or \textit{efficiency}. For a Pareto optimal solution, none of the objectives can be improved without sacrificing the others. Applications of this type of problems can be seen in engineering \citep{MA2004}, economics \citep{TC2007,FW2014}, management science \citep{E1984}, environmental analysis \citep{LW1992}, machine learning \citep{SK2018,YL2021}, etc. 
\par Several scalarization approaches \citep{M2012} have been devised to solve MOPs, which convert a MOP into a single-objective optimization problem (SOP), so that standard mathematical programming methods can be applied. However, the approaches burden the decision-maker to choose the parameters unknown in advance. To overcome the drawback, \citet{M1980} proposed the first descent method for MOPs, and no prior information is needed. \citet{FS2000} later independently reinvented the parameter-free method, called the steepest descent method for multiobjective optimization (SDMO). Motivated by the work of Fliege and Svaiter, some standard mathematical programming methods are extended to solve MOPs \citep[see e.g.][]{DI2004,BI2005,FD2009,QG2011,P2014,FV2016,CL2016,MP2018,LP2018,MP2019}. Unlike the steepest descent method for SOPs, the corresponding descent direction for MOPs is given by solving a direction-finding subproblem. Then, along with the produced direction, the Armijo line search guarantees sufficient descent for every objective function. Due to the monotonicity of the Armijo line search, several inequalities satisfy the line search condition simultaneously. Some literature \citep[see e.g.][]{MF2019,GK2021,MP2016} pointed out that this issue leads to a relatively small stepsize, which decelerates the convergence of SDMO.
\par In recent years, to reduce this weakness, two well-known nonmonotone line search techniques, the max-type proposed by \citet{GL1986} and the average-type proposed by \citet{ZH2004}, are suggested to apply in MOPs. For example, \citet{QJ2017} studied the max-type nonmonotone line search for vector optimization and employed the new method to portfolio management. Instead, \citet{FS2019} combined the classical projected gradient method for multiobjective optimization with average-type line search technique. Recently, \citet{ZY2022} proved the linear convergence of average-type nonmonotone projected gradient method for multiobjective optimization. Parallel to these works, \citet{MF2019} proposed two nonmonotone line search techniques and a hybrid-type line search for multiobjective problems. \citet{GK2021} applied two nonmonotone line search techniques in the quasi-Newton method and proposed an adaptive nonmonotone line search scheme. Besides, \citet{MP2016} extended Barzilai-Borwein's rule to solve MOPs. 
\par Based on the above review, instead of modifying the steepest descent direction, all related research focused on addressing the issue by using stepsize strategies. In fact, along with some well-designed descent directions such as Newton direction \citep{FD2009} and quasi-Newton direction \citep{P2014}, the monotone line search can accept unit stepsize in some special cases. Naturally, questions arise as to why monotonic line search leads to a small stepsize in SDMO and how to handle it effectively. Firstly, we point out that many inequalities in monotonic line search are inadequate to explain the first question. An example shows that monotonic line search could give a small stepsize in bi-objective optimization with strongly convex objective functions. According to the example, we conclude that the small stepsize is mainly due to the imbalances among objective functions. To address this issue, in this paper, we propose a Barzilai-Borwein descent method for multiobjective optimization (BBDMO) based on Barzilai-Borwein's rule \citep{BB1988}, which dynamically tunes gradient magnitudes in direction-finding subproblem. Indeed, the proposed method generates a sequence of new descent directions, called Barzilai-Borwein descent directions. Along with the Barzilai-Borwein descent direction, the Armijo line search can achieve a better stepsize when compared with SDMO. It provides an answer to the second question.
\par The organization of the paper is as follows. Some notations and definitions are given in Sect. 2 for our later use. Sect. 3 concludes the reasons for small stepsize in SDMO. Sect. 4 is devoted to introducing BBDMO and proving the convergence of BBDMO with different line search techniques. Numerical results are presented in Sect. 5, which demonstrates that BBDMO can handle the imbalances among objective functions. At the end of the paper, some conclusions are drawn.
\section{Notations and Definitions}
We give some notations used in this paper.
\begin{itemize}
	\item[$\bullet$] $[m]=\{1,2,...,m\}$.
	\item[$\bullet$]$\Delta_{m}=\left\{\lambda:\sum\limits_{i\in[m]}\lambda_{i}=1,\lambda_{i}\geq0,\ i\in[m]\right\}$ the $m$-dimensional unit simplex. 
	\item[$\bullet$] $\mathbb{R}_{+}$ the set of nonnegative real numbers, $\mathbb{R}_{++}$ the set of strictly positive real numbers.
	\item[$\bullet$] $\|\cdot\|$ the Euclidean distance in $\mathbb{R}^{n}$.
	\item[$\bullet$] $JF(x)\in\mathbb{R}^{m\times n}$, $\nabla F_{i}(x)\in\mathbb{R}^{n}$ and $\nabla^{2}F_{i}(x)\in\mathbb{R}^{n\times n}$ the Jacobian matrix, the gradient and the Hessian matrix of $F_{i}$ at $x$, respectively.
	\item[$\bullet$] $d(a,B) =\inf\limits_{b\in B}\{\|a-b\|\}$ the distance between the point $a$ and the set $B$.
\end{itemize}
To optimize $F$, we present the definition of optimal solutions in the Pareto sense. We introduce partial order induced by $\mathbb{R}^{m}_{+}=\mathbb{R}_{+}\times\cdots\times\mathbb{R}_{+}$:
$$F(y)\leqslant F(z)\ \Leftrightarrow\ F(z)-F(y)\in\mathbb{R}^{m}_{+}.$$
Some definitions used in this paper are given below.
\begin{defi}\rm\citep{M2012}
	A vector $x^{\ast}\in\mathbb{R}^{n}$ is called Pareto optimum to (\ref{MOP}), if there exists no $x\in\mathbb{R}^{n}$ such that $F(x)\leqslant F(x^{\ast})$ and $F(x)\neq F(x^{\ast})$.
\end{defi}

\begin{defi}\label{d1}\rm\citep{FS2000}
	A vector $x^{\ast}\in\mathbb{R}^{n}$ is called  Pareto critical point to (\ref{MOP}), if
	$$\mathrm{range}(JF(x^{*}))\cap-\mathbb{R}_{++}^{m}=\emptyset,$$
	where $\mathrm{range}(JF(x^{*}))$ denotes the range of linear mapping given by the matrix $JF(x^{*})$.
\end{defi}

\begin{defi}\rm\citep{FS2000}
	A vector $d\in\mathbb{R}^{n}$ is called descent direction for $F$ at $x$, if
	$$JF(x)d\in-\mathbb{R}_{++}^{m}.$$
\end{defi}
\section{ Steepest Descent Method for MOPs}
\par For $x\in\mathbb{R}^{n}$, recall that $d(x)$, the steepest descent direction, is defined as the optimal solution of 
\begin{align}\label{eq3.1}
	\min\limits_{d\in\mathbb{R}^{n}} \max\limits_{i\in[m]}\ \langle \nabla F_{i}(x),d\rangle+\frac{1}{2}\|d\|^{2}.
\end{align}
Since $\nabla F_{i}(x)^{T}d+\frac{1}{2}\|d\|^{2}$ is strongly convex for $i\in[m]$, then (\ref{eq3.1}) has a unique minimizer. We denote by $d(x)$ and $\theta(x)$ the optimal solution and optimal value of (\ref{eq3.1}), respectively. Hence,
\begin{equation}\label{E2.1}
	\theta(x) = \min\limits_{d\in \mathbb{R}^{n}}\max\limits_{i\in[m]}\langle \nabla F_{i}(x),d\rangle+\frac{1}{2}\|d\|^{2},
\end{equation}
and
\begin{equation}\label{E2.2}
	d(x) = \mathop{\arg\min}\limits_{d\in \mathbb{R}^{n}}\max\limits_{i\in[m]}\langle \nabla F_{i}(x),d\rangle+\frac{1}{2}\|d\|^{2}.
\end{equation}
Indeed, problem (\ref{eq3.1}) can be rewritten equivalently as the following smooth quadratic problem:
\begin{align*}\tag{QP}\label{QP}
	&\min\limits_{(t,d)\in\mathbb{R}\times\mathbb{R}^{n}}\ t+\frac{1}{2}\|d\|^{2},\\
	&\ \ \ \ \mathrm{ s.t.} \ \langle \nabla F_{i}(x),d\rangle \leq t,\ i\in[m].
\end{align*}
Notice that (\ref{QP}) is convex with linear constraints, then strong duality holds. The Lagrangian of (\ref{QP}) is
$$L((t,d),\lambda)=t+\frac{1}{2}\|d\|^{2} +\sum\limits_{i\in[m]}\lambda_{i}(\langle \nabla F_{i}(x),d\rangle-t).$$
By Karush-Kuhn-Tucker (KKT) conditions, we have
\begin{equation}\label{E3.3}
	\sum\limits_{i\in[m]}\lambda_{i}=1,
\end{equation}

\begin{equation}\label{E3.4}
	d + \sum\limits_{i\in[m]}\lambda_{i}\nabla F_{i}(x)=0,
\end{equation}

\begin{equation}\label{E3.5}
	\langle \nabla F_{i}(x),d\rangle \leq t,\ i\in[m],
\end{equation}

\begin{equation}\label{E3.6}
	\lambda_{i}\geq0,\ i\in[m],
\end{equation}

\begin{equation}\label{E3.7}
	\lambda_{i}(\langle \nabla F_{i}(x),d\rangle-t)=0,\ i\in[m].
\end{equation}
From (\ref{E3.4}), we obtain
\begin{equation}\label{E3.8}
	d(x) = -\sum\limits_{i\in[m]}\lambda_{i}(x)\nabla F_{i}(x),
\end{equation}
where $\lambda(x)=(\lambda_{1}(x),\lambda_{2}(x),...,\lambda_{m}(x))$ is the solution of dual problem:
\begin{align*}\tag{DP}\label{DP}
	-&\min\limits_{\lambda}\frac{1}{2} \|\sum\limits_{i\in[m]}\lambda_{i}\nabla F_{i}(x)\|^{2}\\
	&\mathrm{ s.t.} \ \lambda\in\Delta_{m}.	
\end{align*}
Recall that strong duality holds, we obtain
\begin{equation}\label{E3.9}
	\theta(x)=-\frac{1}{2} \|\sum\limits_{i\in[m]}\lambda_{i}(x)\nabla F_{i}(x)\|^{2}=-\frac{1}{2}\|d(x)\|^{2}.
\end{equation}
From (\ref{E3.5}), we have
\begin{equation}\label{E3.10}
	\langle \nabla F_{i}(x),d(x)\rangle \leq t(x)=\theta(x)-\frac{1}{2}\|d(x)\|^{2}=-\|d(x)\|^{2},\ i\in[m].
\end{equation}
If $\lambda_{i}(x)\neq0$, then (\ref{E3.7}) leads to
\begin{equation}\label{e3.10}
	\langle \nabla F_{i}(x),d(x)\rangle = t(x)=-\|d(x)\|^{2}.
\end{equation}
Denote $\mathcal{A}(x):=\{i:\nabla F_{i}(x)^{T}d(x)=t(x)\}$ is the set of active constraints at $x$.
It follows by (\ref{E3.3}) and (\ref{E3.7}) that
\begin{equation}\label{E3.11}
	\sum\limits_{i\in[m]}\lambda_{i}(x)\nabla F_{i}(x)= t(x)=-\|d(x)\|^{2}.
\end{equation}
Next, we will present some properties of $\theta(x)$ and $d(x)$.
\begin{lem}\rm\label{l3.1}\citep[Lemma 1]{FS2000}
	For the predefined $\theta(x)$ and $d(x)$, we have
	\begin{itemize}
		\item[$\mathrm{(a)}$] the following conditions are equivalent:
		\subitem$\mathrm{(i)}$ The point $x$ is Pareto critical;
		\subitem$\mathrm{(ii)}$ $\theta(x)=0$;
		\subitem$\mathrm{(iii)}$ $d(x)=0.$
		\item[$\mathrm{(b)}$] the following conditions are equivalent:
		\subitem$\mathrm{(i)}$ The point $x$ is non-critical;
		\subitem$\mathrm{(ii)}$ $\theta(x)<0$;
		\subitem$\mathrm{(iii)}$ $d(x)\neq0.$
		\item[$\mathrm{(c)}$] the mappings $x\rightarrow d(x)$ and $x\rightarrow\theta(x)$ are continuous.
	\end{itemize}
\end{lem}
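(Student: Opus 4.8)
The plan is to treat (a) and (b) together and to isolate the continuity claim (c) as the substantive part. Since the three conditions in (b) are precisely the logical negations of those in (a), and ``non-critical'' is by definition the negation of ``Pareto critical'', part (b) follows immediately once (a) is established. So I would first dispatch (a).

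For (a), the equivalence (ii)$\Leftrightarrow$(iii) is essentially free: relation (\ref{E3.9}) gives $\theta(x)=-\frac{1}{2}\|d(x)\|^{2}$, so $\theta(x)=0$ iff $\|d(x)\|=0$ iff $d(x)=0$. It then suffices to link one of these to Pareto criticality. The key observation is that evaluating the objective of (\ref{eq3.1}) at $d=0$ gives value $0$, so $\theta(x)\le 0$ always. Now $x$ is Pareto critical exactly when there is no $d$ with $\langle\nabla F_{i}(x),d\rangle<0$ for all $i$, i.e.\ when $\max_{i\in[m]}\langle\nabla F_{i}(x),d\rangle\ge 0$ for every $d$. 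If this holds, the whole objective of (\ref{eq3.1}) is nonnegative, forcing $\theta(x)=0$. Conversely, if $x$ is not critical, pick $\bar d$ with $\max_{i\in[m]}\langle\nabla F_{i}(x),\bar d\rangle<0$ and evaluate the objective along $t\bar d$: the linear term equals $t\max_{i}\langle\nabla F_{i}(x),\bar d\rangle$ and the quadratic term equals $\frac{1}{2}t^{2}\|\bar d\|^{2}$, so for small $t>0$ the value is strictly negative, giving $\theta(x)<0$. This closes (a).

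The real work is (c). Because $\theta(x)=-\frac{1}{2}\|d(x)\|^{2}$ by (\ref{E3.9}), continuity of $\theta$ reduces to continuity of $d$, so I would focus on showing $x\mapsto d(x)$ is continuous. Write $\phi_{x}(d)=\max_{i\in[m]}\langle\nabla F_{i}(x),d\rangle+\frac{1}{2}\|d\|^{2}$; this is jointly continuous in $(x,d)$ since $F\in C^{1}$, and strongly convex in $d$ with modulus $1$ independent of $x$, which is exactly what guarantees the unique minimizer $d(x)$. Fix $x_{k}\to x$. First I would establish that $\{d(x_{k})\}$ is bounded: from $\phi_{x_{k}}(d(x_{k}))\le\phi_{x_{k}}(0)=0$ and the estimate $\max_{i}\langle\nabla F_{i}(x_{k}),d\rangle\ge-\bigl(\max_{i}\|\nabla F_{i}(x_{k})\|\bigr)\|d\|$, one gets $\frac{1}{2}\|d(x_{k})\|^{2}\le\bigl(\max_{i}\|\nabla F_{i}(x_{k})\|\bigr)\|d(x_{k})\|$, hence $\|d(x_{k})\|\le 2\max_{i}\|\nabla F_{i}(x_{k})\|$, and the right-hand side stays bounded by continuity of the gradients. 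Then for any convergent subsequence $d(x_{k_{j}})\to\bar d$, passing to the limit in $\phi_{x_{k_{j}}}(d(x_{k_{j}}))\le\phi_{x_{k_{j}}}(d)$ for each fixed $d$ yields $\phi_{x}(\bar d)\le\phi_{x}(d)$, so $\bar d$ minimizes $\phi_{x}$ and by uniqueness $\bar d=d(x)$. Since every convergent subsequence of the bounded sequence has the same limit $d(x)$, the full sequence converges, proving continuity of $d$; continuity of $\theta$ then follows from $\theta=-\frac{1}{2}\|d\|^{2}$.

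I expect part (c) to be the main obstacle, and within it the delicate point is the interchange of limits in the subsequence argument: one must use \emph{joint} continuity of $\phi$ (not merely continuity in $x$ for fixed $d$) together with the a priori boundedness of $\{d(x_{k})\}$, while the uniform-in-$x$ strong-convexity modulus is what makes the minimizer single-valued so that the subsequential limit is forced to coincide with $d(x)$.
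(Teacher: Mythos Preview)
Your argument is correct. Note, however, that the paper does not supply its own proof of this lemma: it is quoted verbatim as \citep[Lemma~1]{FS2000} and stated without proof. What you have written is essentially the standard proof from that reference---the characterization of criticality via the sign of $\theta(x)$ together with a strong-convexity/compactness argument for continuity---so there is nothing to contrast.
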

\par For every iteration $k$, after obtaining the unique steepest descent direction $d^{k}\neq0$, the classical Armijo technique is applied for line search. 

\begin{algorithm}  
	\caption{\ttfamily Armijo\_line\_search} 
	\KwData{$x^{k}\in\mathbb{R}^{n},d^{k}\in\mathbb{R}^{n},JF(x^{k})\in\mathbb{R}^{m\times n},\sigma,\gamma\in(0,1),\beta=1$}
	\While{$F(x^{k}+\beta d^{k})- F(x^{k}) \nleq \sigma\beta JF(x^{k})d^{k}$}{~\\$\beta\leftarrow \gamma\beta$ }{$\beta^{k}\leftarrow \beta$} 
	\label{alg1} 
\end{algorithm}
\par The following result shows that the Armijo technique will accept a stepsize along with $d(x)$.
\begin{lem}\rm\label{l3.2}\citep[Lemma 4]{FS2000}
	Assume that $x\in\mathbb{R}^{n}$ is a noncritical point of $F$. Then, for any $\sigma\in(0,1)$ there exists $\beta_{0}\in (0,1]$ such that
	$$F_{i}(x+\beta d(x)) - F_{i}(x)\leq \sigma \beta\theta(x)$$
	holds for all $\beta\in [0,\beta_{0}]$ and $i\in[m]$.
\end{lem}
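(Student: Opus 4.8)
The plan is to combine the first-order differentiability of each $F_i$ with the sharp descent estimate $\langle \nabla F_i(x),d(x)\rangle \leq \theta(x)$ extracted from the KKT analysis, and then to absorb the Taylor remainder into the slack created by the factor $\sigma<1$.

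First I would record the key descent inequality. From (\ref{E3.10}) we have $\langle \nabla F_i(x),d(x)\rangle \leq t(x) = \theta(x)-\frac{1}{2}\|d(x)\|^{2} \leq \theta(x)$ for every $i\in[m]$, the last step using $\|d(x)\|^{2}\geq 0$. Since $x$ is noncritical, Lemma \ref{l3.1}(b) gives $\theta(x)<0$, so that $-\theta(x)>0$; this strict positivity is exactly what drives the argument. Next, because each $F_i$ is continuously differentiable, I would write the first-order expansion
\[
F_i(x+\beta d(x))-F_i(x)=\beta\langle \nabla F_i(x),d(x)\rangle+r_i(\beta),\qquad r_i(\beta)/\beta\to 0 \ \text{as}\ \beta\to 0^{+}.
\]

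I would then observe that the target inequality $F_i(x+\beta d(x))-F_i(x)\leq \sigma\beta\theta(x)$ is equivalent to $r_i(\beta)\leq \sigma\beta\theta(x)-\beta\langle \nabla F_i(x),d(x)\rangle$. Invoking $\langle \nabla F_i(x),d(x)\rangle\leq\theta(x)$ (hence $-\beta\langle \nabla F_i(x),d(x)\rangle\geq-\beta\theta(x)$), the right-hand side is bounded below by $\beta(\sigma-1)\theta(x)=\beta(1-\sigma)(-\theta(x))$, a strictly positive multiple of $\beta$. It therefore suffices to produce $\beta_0$ so that $r_i(\beta)\leq \beta(1-\sigma)(-\theta(x))$ for all $\beta\in(0,\beta_0]$ and all $i\in[m]$.

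Finally, I would close the estimate by the remainder property: setting $\varepsilon=(1-\sigma)(-\theta(x))>0$, for each $i$ there is $\delta_i>0$ with $r_i(\beta)<\varepsilon\beta$ whenever $0<\beta<\delta_i$, and choosing $\beta_0=\min\{1,\delta_1,\ldots,\delta_m\}$ handles every index simultaneously, while the case $\beta=0$ holds trivially with equality. The only point needing care is the uniform choice of $\beta_0$ across all objectives, but since $[m]$ is finite the minimum of the finitely many thresholds $\delta_i$ is again positive, so there is no genuine obstacle; the finiteness of the number of objectives is precisely what lets the pointwise Taylor estimates be merged into a single stepsize bound.
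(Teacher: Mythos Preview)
Your argument is correct and is the standard one. The paper does not supply its own proof of this lemma but simply cites \citep[Lemma~4]{FS2000}; the proof there proceeds exactly along your lines, using the first-order expansion of each $F_i$ together with the descent inequality $\langle\nabla F_i(x),d(x)\rangle\leq\theta(x)<0$ and then taking a finite minimum over $i\in[m]$ to obtain a common threshold. One purely cosmetic point: since you define each $\delta_i$ via the strict inequality $0<\beta<\delta_i$, the choice $\beta_0=\min\{1,\delta_1,\ldots,\delta_m\}$ may fail to cover the right endpoint $\beta=\beta_0$ when the minimum is attained at some $\delta_j$; taking $\beta_0=\tfrac{1}{2}\min\{1,\delta_1,\ldots,\delta_m\}$ (or using continuity of the remainder) removes this.
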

The stepsize obtained by Algorithm \ref{alg1} has a lower bound. 
\begin{lem}\rm\label{l3.3}\citep[Lemma 3.1]{FV2019}
	Assume $\nabla F_{i}$ is Lipschitz continuous with constant $L_{i},\ i\in[m]$, then the stepsize generated by Algorithm \ref{alg1} satisfies $\beta^{k}\geq\beta^{\min}:=\min\{\frac{2\gamma(1-\sigma)}
	{L_{\max}},1\}$, where $L_{\max}:=\max\{L_{i}:i\in[m]\}$.
\end{lem}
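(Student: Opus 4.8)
The plan is to combine the Lipschitz descent inequality with the characteristic bound (\ref{E3.10}) satisfied by the steepest descent direction, and then run a contrapositive argument on the backtracking loop. First I would invoke the standard descent lemma: since each $\nabla F_i$ is $L_i$-Lipschitz, for every $\beta>0$,
$$F_i(x^k+\beta d^k)-F_i(x^k)\leq \beta\langle \nabla F_i(x^k),d^k\rangle+\frac{L_i}{2}\beta^2\|d^k\|^2,\quad i\in[m].$$
The aim of this step is to pin down an explicit interval of stepsizes on which the Armijo test is certain to pass for every objective at once.

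Next I would compare the right-hand side above against the Armijo threshold $\sigma\beta\langle \nabla F_i(x^k),d^k\rangle$. Subtracting and dividing by $\beta>0$, a sufficient condition for the $i$-th Armijo inequality becomes
$$\frac{L_i}{2}\beta\|d^k\|^2\leq -(1-\sigma)\langle \nabla F_i(x^k),d^k\rangle.$$
Here I would apply the key property (\ref{E3.10}), namely $\langle \nabla F_i(x^k),d^k\rangle\leq -\|d^k\|^2$, which bounds the right-hand side below by $(1-\sigma)\|d^k\|^2$. Dividing through by $\|d^k\|^2>0$ (legitimate because the line search is invoked only when $d^k\neq 0$, by Lemma \ref{l3.1}) reduces the requirement to $\beta\leq 2(1-\sigma)/L_i$. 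Taking the worst case over $i$, every $\beta\in(0,\,2(1-\sigma)/L_{\max}]$ makes all $m$ Armijo inequalities hold simultaneously.

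Finally I would analyze the backtracking. The returned $\beta^k$ is the first term of the sequence $1,\gamma,\gamma^2,\dots$ to satisfy the test in Algorithm \ref{alg1}. If $\beta^k=1$, then trivially $\beta^k\geq\beta^{\min}$. Otherwise $\beta^k<1$, so the immediately preceding trial value $\beta^k/\gamma$ was rejected; by the contrapositive of the previous step, rejection forces $\beta^k/\gamma> 2(1-\sigma)/L_{\max}$, whence $\beta^k> 2\gamma(1-\sigma)/L_{\max}$. Combining the two cases gives $\beta^k\geq\min\{2\gamma(1-\sigma)/L_{\max},\,1\}=\beta^{\min}$.

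The argument is largely routine; the one place that requires care—and the real engine of the proof—is the second step, where the direction-specific inequality (\ref{E3.10}) is used to convert the per-objective bound $2(1-\sigma)/L_i$ into a single uniform threshold $2(1-\sigma)/L_{\max}$ valid across all $i$. Without (\ref{E3.10}) coupling $\langle \nabla F_i(x^k),d^k\rangle$ to $\|d^k\|^2$, the $\|d^k\|^2$ factors would fail to cancel and no stepsize-independent lower bound could be extracted.
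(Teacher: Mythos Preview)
Your argument is correct. Note, however, that the paper does not supply its own proof of this lemma: it is quoted from \citep[Lemma 3.1]{FV2019} and stated without proof. That said, the very technique you use---apply the descent lemma, invoke $\langle\nabla F_i(x^k),d^k\rangle\leq -\|d^k\|^2$ from (\ref{E3.10}), and then argue that if $\beta^k<1$ the previous trial $\beta^k/\gamma$ must have violated the resulting sufficient condition---is exactly the argument the paper deploys later in the proof of Proposition~4.2 (see (\ref{E4.4})--(\ref{E4.6})), with $\alpha_i^k$ in place of $1$. So your proof matches both the standard approach and the paper's own style of reasoning.
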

We also give an upper bound of the stepsize obtained by Algorithm \ref{alg1}.
\begin{lem}\rm\label{l3.4}
	Assume $F_{i}$ is strongly convex with modulus $\mu_{i},\ i\in[m]$, then the stepsize generated by Algorithm \ref{alg1} satisfies $\beta^{k}\leq\beta^{\max}:=\min\{\frac{2(1-\sigma)}
	{\mu_{\max}},1\}$, where $\mu_{\max}:=\max\{\mu_{i}:i\in\mathcal{A}(x^{k})\}$.
\end{lem}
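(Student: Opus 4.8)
The plan is to combine the Armijo acceptance condition with the strong-convexity lower bound and then exploit the active-constraint identity from equation (\ref{e3.10}). Since Algorithm \ref{alg1} initializes $\beta=1$ and only shrinks it, we already have $\beta^k\le 1$; the substance of the lemma is therefore the bound $\beta^k\le 2(1-\sigma)/\mu_{\max}$, which is only binding when $2(1-\sigma)/\mu_{\max}<1$. Because $x^k$ is noncritical, $d^k\neq 0$ and $\|d^k\|^2>0$, and by (\ref{E3.3})--(\ref{E3.7}) some multiplier is positive, so the active set $\mathcal{A}(x^k)$ is nonempty and $\mu_{\max}$ is well defined.

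First I would record the condition that the accepted stepsize $\beta^k$ necessarily satisfies: for every $i\in[m]$,
\[
F_i(x^k+\beta^k d^k)-F_i(x^k)\le \sigma\beta^k\langle\nabla F_i(x^k),d^k\rangle .
\]
Next, for each $i$ I would invoke strong convexity of $F_i$ with modulus $\mu_i$, evaluated at $x^k$ and $x^k+\beta^k d^k$, to obtain the lower bound
\[
F_i(x^k+\beta^k d^k)-F_i(x^k)\ge \beta^k\langle\nabla F_i(x^k),d^k\rangle+\frac{\mu_i}{2}(\beta^k)^2\|d^k\|^2 .
\]
Chaining these two inequalities eliminates the function-value difference and, after cancelling the common term $\beta^k\langle\nabla F_i(x^k),d^k\rangle$, yields $\frac{\mu_i}{2}(\beta^k)^2\|d^k\|^2\le-(1-\sigma)\beta^k\langle\nabla F_i(x^k),d^k\rangle$.

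The decisive step is to specialize this to an active index $i\in\mathcal{A}(x^k)$, where (\ref{e3.10}) supplies the exact identity $\langle\nabla F_i(x^k),d^k\rangle=-\|d^k\|^2$ rather than only the inequality $\le-\|d^k\|^2$ valid for all $i$. Substituting it and dividing through by the positive quantities $\beta^k$ and $\|d^k\|^2$ collapses the estimate to $\beta^k\le 2(1-\sigma)/\mu_i$ for every active $i$. As $t\mapsto 2(1-\sigma)/t$ is decreasing, the sharpest of these bounds is attained at the largest modulus, i.e. $\beta^k\le 2(1-\sigma)/\mu_{\max}$; combining with $\beta^k\le 1$ gives $\beta^k\le\beta^{\max}$.

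I expect no serious obstacle; the only point requiring care is the restriction to active constraints. A nonactive index, via $\langle\nabla F_i(x^k),d^k\rangle\le-\|d^k\|^2$, would produce only the weaker bound $\beta^k\le-2(1-\sigma)\langle\nabla F_i(x^k),d^k\rangle/(\mu_i\|d^k\|^2)$, whose right-hand side is at least $2(1-\sigma)/\mu_i$. Thus the active-constraint identity (\ref{e3.10}) is precisely what makes the estimate tight and explains why $\mu_{\max}$ is taken over $\mathcal{A}(x^k)$ only.
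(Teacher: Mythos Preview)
Your proposal is correct and follows essentially the same route as the paper: combine the Armijo acceptance inequality with the strong-convexity lower bound, then specialize to indices in $\mathcal{A}(x^k)$ via (\ref{e3.10}) to replace $\langle\nabla F_i(x^k),d^k\rangle$ by $-\|d^k\|^2$, divide through, and take the tightest bound. Your added remarks on why $\mathcal{A}(x^k)\neq\emptyset$ and why nonactive indices give only a weaker estimate are accurate and make the argument slightly more explicit than the paper's version.
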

\begin{proof} By the line search condition in Algorithm \ref{alg1}, we have
	\begin{equation}\label{e4.7}
		F_{i}(x^{k}+\beta^{k}d^{k})-F_{i}(x^{k}) \leq \sigma\beta^{k}\langle\nabla F_{i}(x^{k}),d^{k}\rangle,\ \forall i\in[m].
	\end{equation}
	On the other hand, from the $\mu_{i}$-strong convexity of $F_{i}$, we have
	\begin{equation}\label{e4.8}
		F_{i}(x^{k}+\beta^{k}d^{k})-F_{i}(x^{k})\geq \beta^{k}\langle\nabla F_{i}(x^{k}),d^{k}\rangle+\frac{\mu_{i}}{2}||\beta^{k}d^{k}
		||^{2},\ \forall i\in[m].
	\end{equation}
	It follows by (\ref{e4.7}) and (\ref{e4.8}) that
	$$\frac{\mu_{i}}{2}||\beta^{k}d^{k}
	||^{2}\leq(\sigma-1)\beta^{k}\langle\nabla F_{i}(x^{k}),d^{k}\rangle,\ \forall i\in[m].$$
	Hence,
	$$\max\limits_{i\in\mathcal{A}(x^{k})}\frac{\mu_{i}}{2}||\beta^{k}d^{k}
	||^{2}\leq(1-\sigma)\beta^{k}||d^{k}
	||^{2},$$
	due to the definition of $\mathcal{A}(x^{k})$.
	Then we conclude that
	$$\beta^{k}\leq\frac{2(1-\sigma)}{\max\limits_{i\in\mathcal{A}(x^{k})}\mu_{i}}=
	\frac{2(1-\sigma)}{\mu_{\max}}.$$
	Notice that $\beta^{k}\leq1$, thus,
	$$\beta^{k}\leq\beta^{\max}:=\min\{\frac{2(1-\sigma)}
	{\mu_{\max}},1\}.$$
\end{proof}
Let us now summarize Lemmas \ref{l3.3} and \ref{l3.4}. Assume $\nabla F_{i}$ is Lipschitz continuous with constant $L_{i}$ and $F_{i}$ is strongly convex with modulus $\mu_{i}$, $i\in[m]$. We have the following statement.
\begin{rem}\label{imbalance}
	The stepsize obtained by Algorithm \ref{alg1} satisfies $\min\{\frac{2\gamma(1-\sigma)}
	{L_{\max}},1\}\leq\beta^{k}\leq\min\{\frac{2(1-\sigma)}
	{\mu_{\max}},1\}$. Similarly, for a single-objective function $F_{i}$ the stepsize depends on $L_{i}$ and $\mu_{i}$. Hence, the sets $\{L_{i}:i\in[m]\}$ and $\{\mu_{i}:i\in[m]\}$ are more imbalanced (the imbalances mean the differences among elements in a set), the stepsize generated by Algorithm \ref{alg1} is relatively smaller. In general, as the number of objective functions grows, the imbalances of $\{L_{i}:i\in[m]\}$ and $\{\mu_{i}:i\in[m]\}$ increase. 
\end{rem}
However, the imbalances can be great even in bi-objective optimization problems. We will give an example to claim it.
\begin{exam}\label{exam1}
	Consider the following multiobjective optimization problem:
	$$\min\limits_{x\in\mathbb{R}^{2}}\ (f_1(x),f_2(x))$$
	where $f_{1}(x_{1},x_{2})=100(x_{1}-50)^{2}+100(x_{2}+50)^{2}$, $f_2(x_{1},x_{2})=\frac{1}{2}x_{1}^{2}+\frac{1}{2}x_{2}^{2}$. Then, we have 
	$$\nabla^{2} f_{1}(x_{1},x_{2})=\begin{pmatrix} 200&\ 0\\0&\ 200 \end{pmatrix},$$
	and
	$$\nabla^{2}f_{2}(x_{1},x_{2})=\begin{pmatrix} 1 &\ 0\\0 &\ 1 \end{pmatrix}.$$
	If $x^{k}=\begin{pmatrix} 1\\0 \end{pmatrix}$, then $\lambda_{1}^{k},\lambda_{2}^{k}\neq0$. It follows by (\ref{e3.10}) that
	$$\mathcal{A}(x^{k})=\{1,2\}.$$
	Then we have
	$$\mu_{\max}=200\ \mathrm{and}\ L_{\max}=200.$$
	According to Lemmas \ref{l3.3} and \ref{l3.4}, we obtain
	\begin{equation}\label{e3.16}
		\frac{\gamma(1-\sigma)}{100}\leq\beta^{k}\leq\frac{1-\sigma}{100}<0.01.
	\end{equation}
	Furthermore, $\|d^{k}\|\leq\|\nabla f_{2}(x^{k})\|=\|x^{k}\|$, then $\|\beta^{k}d^{k}\|<0.01$. Denote $X^{*}:=\{x:x=s(0,0)+(1-s)(50,-50),s\in[0,1]\}$ the Pareto set, we have $d(x^{k},X^{*})=\frac{\sqrt{2}}{2}$ and $d(x^{k+1},X^{*})\geq d(x^{k},X^{*})-\|\beta^{k}d^{k}\|>\frac{\sqrt{2}}{2}-0.01$. This implies $\frac{d(x^{k+1},X^{*})}{d(x^{k},X^{*})}>\frac{100-\sqrt{2}}{100}$, which leads to slow convergence.
\end{exam}

The steepest descent method for MOPs is described as follows.
\begin{algorithm}  
	\caption{{\ttfamily{steepest\_descent\_method\_for\_MOPs}}\ \citep{FS2000}}
	\SetAlgoLined  
	\KwData{$x^{0}\in\mathbb{R}^{n}$}
	\For{$k=0,1,...$}{  $\lambda^{k}\leftarrow \mathop{\arg\min}\limits_{\lambda\in\Delta_{m}}\frac{1}{2} \|\sum\limits_{i\in[m]}\lambda_{i}\nabla F_{i}(x^{k})\|^{2}$\\  
		$d^{k}\leftarrow-\sum\limits_{i\in[m]}\lambda_{i}^{k}\nabla F_{i}(x^{k})$\\
		\eIf{$d^{k}=0$}{~\\ {\bf{return}} Pareto critical point $x^{k}$ }{$\beta^{k}\leftarrow$ {\ttfamily Armijo\_line\_search}$(x^{k},d^{k},JF(x^{k}))$\\
			$x^{k+1}\leftarrow x^{k}+\beta^{k}d^{k}$}}  
	\label{alg2}
\end{algorithm}
\section{BBDMO: A Barzilai-Borwein descent Method for MOPs}
In this section, a Barzilai-Borwein descent method is devised to overcome the drawback of Algorithm \ref{alg1}. At first, we explain why the Barzilai-Borwein method is taken into consideration. Recall that the imbalances among objective functions will give a relatively small stepsize. The direct reason for the issue is that different objective functions have a similar amount of descent (due to (\ref{e3.10})) in each iteration. Since the local second-order information is not used in direction-finding subproblems, the descent direction with the same inner products among gradients will lead to a relatively small stepsize. Naturally, a descent direction with different inner products among gradients is required. Fortunately, it is valid for Newton direction. Recall the direction-finding subproblem for the Newton method:
\begin{equation}\label{E4.0}
	\min\limits_{d\in\mathbb{R}^{n}} \max\limits_{i\in[m]}\ \langle\nabla F_{i}(x),d\rangle+\frac{1}{2}\langle d,\nabla^{2}F_{i}(x)d\rangle,
\end{equation}
where  $F:\mathbb{R}^{n}\rightarrow\mathbb{R}^{m}$ is a twice continuously differentiable function. In light of the analysis in \citep{FD2009}, we have
$$\langle\nabla F_{i}(x),d(x)\rangle=-\frac{1}{2}\langle d(x),\nabla^{2}F_{i}(x)d(x)\rangle+t(x),\ \lambda_{i}(x)\neq0,$$
where $d(x)$ and $t(x)$ are optimal solution and optimal value of (\ref{E4.0}), respectively. In order to achieve this with a first-order method, we devise the following direction-finding subproblem at $x^{k}$:
\begin{equation}\label{E4.1}
	\min\limits_{d\in\mathbb{R}^{n}} \max\limits_{i\in[m]}\ \langle \nabla \hat{F}_{i}(x^{k}),d\rangle+\frac{1}{2}\|d\|^{2},
\end{equation}
where $$\nabla\hat{F}_{i}(x^{k})=\frac{1}{\alpha^{k}_{i}}\nabla F_{i}(x^{k}),\  \alpha^{k}_{i}\in\mathbb{R}_{++},\ i\in[m].$$
Similarly, denote $\hat{\lambda}^{k}$ the KKT multiplier vector. By KKT conditions, if $\hat\lambda^{k}_{i}\neq0$, then $\langle \nabla\hat{F}_{i}(x^{k}),d^{k}\rangle=-\|d^{k}\|^{2}$, i.e., $\langle \nabla F_{i}(x^{k}),d^{k}\rangle=-\alpha^{k}_{i}\|d^{k}\|^{2}$. Therefore, this raises the immediate question: how to choose an appropriate $\alpha^{k}\in\mathbb{R}^{m}$ to accelerate convergence. In view of (\ref{E4.1}), every maximum term can be rewritten as  
\begin{equation}\label{E4.2}
	\langle \nabla F_{i}(x^{k}),d\rangle+\frac{\alpha^{k}_{i}}{2}\|d\|^{2}.
\end{equation}
It is well-known by using $\alpha^{k}_{i}I$ as an approximation of $\nabla^{2}F_{i}(x^{k})$ so that one possible use for $\alpha^{k}$ could be given by Barzilai-Borwein Method. 
\subsection{Barzilai-Borwein descent Method for MOPs}
Firstly, a brief introduction of Barzilai-Borwein's method for single-objective optimization is given. To minimize a smooth function $f:\mathbb{R}^{n}\rightarrow\mathbb{R}$, the Barzilai-Borwein's method updates iterates as follows
$$x^{k+1}=x^{k}-\frac{1}{\alpha^{k}}\nabla f(x^{k}),$$
where $\alpha^{k}$ is the solution of the following problem
$$\min\limits_{\alpha}\|\alpha s^{k-1} -y^{k-1}\|^{2}$$
with $s^{k-1}=x^{k}-x^{k-1}$ and $y^{k-1}=\nabla f({x^{k}})-\nabla f(x^{k-1})$.
Then, simple calculations lead to
$$\alpha^{k}=\frac{\langle s^{k-1},y^{k-1}\rangle}{\langle s^{k-1},s^{k-1}\rangle}.$$
Along with this, we will present a Barzilai-Borwein descent method for multiobjective optimization problems. The BBDMO updates iterates as follows
$$x^{k+1}=x^{k}+d^{k}_{BB},$$
where $d^{k}_{BB}$ is the solution of (\ref{E4.1}) with 
$$ \alpha^{k}_{i}=\frac{\langle s^{k-1},y^{k-1}_{i}\rangle}{\langle s^{k-1},s^{k-1}\rangle},\ i\in[m],$$
and
$$s^{k-1}=x^{k}-x^{k-1},\ y^{k-1}_{i}=\nabla F_{i}({x^{k}})-\nabla F_{i}(x^{k-1}),\ i\in[m].$$
Recall the Barzilai-Borwein's method for multiobjective optimization (BBMO) problems \citep{MP2016}, which updates iterates as follows
$$x^{k+1}=x^{k}+\frac{1}{\bar{\alpha}^{k}}d^{k},$$
where $d^{k}$ is the multiobjective steepest descent direction and 
$$\bar{\alpha}^{k}=\frac{\langle s^{k-1},y^{k-1}\rangle}{\langle s^{k-1},s^{k-1}\rangle}$$ 
with
$$s^{k-1}=x^{k}-x^{k-1},\ y^{k-1}_{i}=d^{k}-d^{k-1}.$$
Next, we give the relations of BBDMO between BBMO and Barzilai-Borwein's method for single-objective optimization problems. 
\begin{rem}
	If $m=1$, then $d^{k}_{BB}=-\frac{1}{\alpha^{k}}\nabla F(x^{k})$. In this case, BBDMO reduces to the Barzilai-Borwein's method for single-objective optimization problems. In terms of BBMO, it is a special case of BBDMO with $\alpha^{k}_{i}=\alpha^{k}$ for all $i\in[m]$.
\end{rem}

There is a significant difference between BBDMO and BBMO. 
\begin{rem}
	The descent direction of BBMO is the same as the one in SDMO. However, the statement is usually not true for BBDMO. If $\lambda^{k}_{i},\lambda^{k}_{j}\neq0$ for some $i,j\in[m]$, then $\langle \nabla\hat{F}_{i}({x^{k}}),d^{k}_{BB}\rangle=\langle \nabla\hat{F}_{j}({x^{k}}),d^{k}_{BB}\rangle$. In general, $\langle \nabla F_{i}({x^{k}}),d^{k}_{BB}\rangle\neq\langle \nabla F_{j}({x^{k}}),d^{k}_{BB}\rangle$ due to $\alpha^{k}_{i}\neq\alpha^{k}_{j}$ ($\alpha^{k}_{i}$ and $\alpha^{k}_{j}$ are objective-based). Thus, BBDMO produces a sequence of new descent directions, which has different inner products among gradients.
\end{rem}
Similarly, we will present some properties of $d^{k}_{BB}$ as follows.
\begin{lem}\rm\label{l4.1}
	Assume $F_{i}$ is strictly convex and $\nabla F_{i}$ is Lipschitz continuous with constant $L_{i},\ i\in[m]$. For the predefined $d^{k}_{BB}$, we have 
	\begin{itemize}
		\item[$\mathrm{(a)}$] vector $d^{k}_{BB}\neq0$ is a descent direction.
		\item[$\mathrm{(b)}$] the following conditions are equivalent:
		\subitem$\mathrm{(i)}$ The point $x^{k}$ is Pareto critical;
		\subitem$\mathrm{(ii)}$ $d^{k}_{BB}=0.$
		\item[$\mathrm{(c)}$] the following conditions are equivalent:
		\subitem$\mathrm{(i)}$ the point $x^{k}$ is non-critical;
		\subitem$\mathrm{(ii)}$ $d^{k}_{BB}\neq0.$
		\item[$\mathrm{(d)}$] if there exists a convergent subsequence $x^{k}\stackrel{\mathcal{K}}{\longrightarrow} x^{*}$ such that $d^{k}_{BB}\stackrel{\mathcal{K}}{\longrightarrow}0$, then $x^{*}$ is Pareto critical.
	\end{itemize}
\end{lem}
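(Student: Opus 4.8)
The plan is to exploit the fact that the subproblem (\ref{E4.1}) has exactly the same structure as the steepest descent subproblem (\ref{eq3.1}), only with $\nabla F_i(x^k)$ replaced by the rescaled gradient $\nabla\hat F_i(x^k)=\frac{1}{\alpha_i^k}\nabla F_i(x^k)$. Consequently the whole KKT analysis (\ref{E3.3})--(\ref{E3.11}) and Lemma \ref{l3.1} carry over verbatim to the scaled data: writing $\theta_{BB}(x^k)$ for the optimal value of (\ref{E4.1}), one has $\theta_{BB}(x^k)=-\frac12\|d^k_{BB}\|^2$ and $\langle\nabla\hat F_i(x^k),d^k_{BB}\rangle\le-\|d^k_{BB}\|^2$ for every $i\in[m]$. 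The extra ingredient I would isolate first is the sign invariance coming from positivity of the scaling: since $F_i$ is strictly convex, $\langle s^{k-1},y_i^{k-1}\rangle>0$, so every $\alpha_i^k>0$, and therefore for any $d$ one has $J\hat F(x^k)d\in-\mathbb{R}^m_{++}$ if and only if $JF(x^k)d\in-\mathbb{R}^m_{++}$. Hence $x^k$ is Pareto critical for $F$ exactly when it is Pareto critical for the scaled data, and the descent directions of the two problems coincide.

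With this observation parts (a)--(c) are immediate. For (a), when $d^k_{BB}\ne0$ the inequality $\langle\nabla\hat F_i(x^k),d^k_{BB}\rangle\le-\|d^k_{BB}\|^2<0$ reads $\frac{1}{\alpha_i^k}\langle\nabla F_i(x^k),d^k_{BB}\rangle<0$, and multiplying by $\alpha_i^k>0$ gives $\langle\nabla F_i(x^k),d^k_{BB}\rangle<0$ for all $i$, i.e.\ $JF(x^k)d^k_{BB}\in-\mathbb{R}^m_{++}$. For (b), Lemma \ref{l3.1}(a) applied to the scaled subproblem gives $d^k_{BB}=0$ iff $x^k$ is Pareto critical for the scaled data, which by the sign invariance is iff $x^k$ is Pareto critical for $F$. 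Part (c) is the logical negation of (b).

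The substantive step is (d), and the main obstacle is that the scaling vector $\alpha^k$ depends on $k$, so one cannot simply invoke the continuity statement Lemma \ref{l3.1}(c) for a single fixed problem. I would argue by contradiction: suppose $x^*$ is not Pareto critical, so there is $\hat d$ with $\langle\nabla F_i(x^*),\hat d\rangle<0$ for all $i$. The crucial uniform control is an upper bound on the scalings: by Lipschitz continuity of $\nabla F_i$, $\langle s^{k-1},y_i^{k-1}\rangle\le L_i\|s^{k-1}\|^2$, hence $\alpha_i^k\le L_i$ and $\frac{1}{\alpha_i^k}\ge\frac{1}{L_i}$. Testing (\ref{E4.1}) at the fixed vector $\bar d=t\hat d$ gives, for $k\in\mathcal K$ large enough that every $\langle\nabla F_i(x^k),\bar d\rangle<0$,
\begin{align*}
-\tfrac12\|d^k_{BB}\|^2=\theta_{BB}(x^k) &\le \max_{i\in[m]}\tfrac{1}{\alpha_i^k}\langle\nabla F_i(x^k),\bar d\rangle+\tfrac12\|\bar d\|^2\\
&\le \max_{i\in[m]}\tfrac{1}{L_i}\langle\nabla F_i(x^k),\bar d\rangle+\tfrac12\|\bar d\|^2,
\end{align*}
where the last inequality uses that scaling a negative number by a larger positive factor decreases it. Letting $k\to\infty$ along $\mathcal K$ (so $d^k_{BB}\to0$ and $\nabla F_i(x^k)\to\nabla F_i(x^*)$) yields $0\le t\max_{i\in[m]}\frac{1}{L_i}\langle\nabla F_i(x^*),\hat d\rangle+\frac{t^2}{2}\|\hat d\|^2$. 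Since the maximum term $\max_{i\in[m]}\frac{1}{L_i}\langle\nabla F_i(x^*),\hat d\rangle$ is strictly negative, choosing $t>0$ small makes the right-hand side negative, a contradiction; therefore $x^*$ is Pareto critical. The only place requiring care is keeping the scalings uniformly bounded: the Lipschitz bound $\alpha_i^k\le L_i$ supplies the lower bound on $\frac{1}{\alpha_i^k}$ that drives the contradiction, while strict convexity guarantees $\alpha_i^k>0$ so that both the subproblem and the sign argument are well posed.
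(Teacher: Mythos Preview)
Your treatment of (a)--(c) is essentially the paper's: both reduce the scaled subproblem to the Fliege--Svaiter analysis via the positivity of $\alpha_i^k$, and your sign--invariance remark makes this reduction explicit and clean.

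For (d) your argument is correct but genuinely different from the paper's. The paper works on the \emph{dual} side: from $d^k_{BB}=-\sum_{i}\lambda_i^k\,\nabla F_i(x^k)/\alpha_i^k$ with $\lambda^k\in\Delta_m$, it renormalises to a new simplex weight $\bar\lambda^k$ and obtains the quantitative inequality
\[
\|d^k_{BB}\|\;=\;\Bigl(\sum_i\frac{\lambda_i^k}{\alpha_i^k}\Bigr)\,\Bigl\|\sum_i\bar\lambda_i^k\nabla F_i(x^k)\Bigr\|\;\ge\;\frac{1}{L_{\max}}\,\|d(x^k)\|,
\]
using $\alpha_i^k\le L_{\max}$ and that $d(x^k)$ is the minimum--norm element of the convex hull of the gradients. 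Then $d^k_{BB}\to0$ forces $d(x^k)\to0$, and the continuity of the steepest descent map $x\mapsto d(x)$ from Lemma~\ref{l3.1}(c) gives $d(x^*)=0$. Your route is \emph{primal}: you test the subproblem at a fixed competitor $t\hat d$, use $\alpha_i^k\le L_i$ to pass to a $k$--independent bound, let $k\to\infty$, and choose $t$ small to reach a contradiction. Both hinge on the same Lipschitz upper bound for $\alpha_i^k$; the paper's version additionally yields the explicit comparison $\|d^k_{BB}\|\ge L_{\max}^{-1}\|d(x^k)\|$, which can be reused for rate analysis, whereas your version is more self--contained since it avoids both the dual problem and the continuity statement in Lemma~\ref{l3.1}(c).
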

\begin{proof}
	Since $F_{i}$ is strictly convex and $\nabla F_{i}$ is Lipschitz continuous with constant $L_{i},\ i\in[m]$, we obtain the following bounds:
	$$0<\langle \nabla F_{i}({x^{k}})-\nabla F_{i}(x^{k-1}),x^{k}-x^{k-1}\rangle\leq L_{i}\|x^{k}-x^{k-1}\|^{2},\ i\in[m].$$
	Hence
	\begin{equation}
		0<\alpha_{i}^{k}\leq L_{\max}:=\max\limits_{i\in[m]}L_{i},\ i\in[m].
	\end{equation}
	Then assertions (a)-(c) can be obtained by using the same arguments as in the proof of \citep[Lemma 1]{FS2000}. Next, we prove assertion (d). From equality (\ref{E3.8}), there exists $\lambda^{k}\in\Delta_{m}$ such that
	$d^{k}_{BB}=-\sum\limits_{i\in[m]}\lambda^{k}_{i}\frac{\nabla F_{i}(x^{k})}{\alpha_{i}^{k}}$. In what follows, we give the relation between $d^{k}_{BB}$ and the steepest descent direction $d(x^{k})$:
	\begin{align*}
		\|d^{k}_{BB}\|&=\left\|\sum\limits_{i\in[m]}\lambda^{k}_{i}\frac{\nabla F_{i}(x^{k})}{\alpha_{i}^{k}}\right\|\\
		&=(\sum\limits_{i\in[m]}\frac{\lambda^{k}_{i}}{\alpha^{k}_{i}})\left\|\sum\limits_{i\in[m]}\bar{\lambda}^{k}_{i}\nabla F_{i}(x^{k})\right\|\\
		&\geq\frac{1}{L_{\max}}\left\|\sum\limits_{i\in[m]}\bar{\lambda}^{k}_{i}\nabla F_{i}(x^{k})\right\|\\
		&\geq\frac{1}{L_{\max}}\|d(x^{k})\|,
	\end{align*}
	where the first inequality is given by $\lambda\in\Delta_{m}$ and $\alpha^{k}_{i}\leq L_{\max}$, the second inequality follows from $\bar{\lambda}\in\Delta_{m}$ and $d(x^{k})$ is the solution of (\ref{DP}). Therefore,
	$d^{k}_{BB}\stackrel{\mathcal{K}}{\longrightarrow}0$ implies $d(x^{k})\stackrel{\mathcal{K}}{\longrightarrow}0$, this together with the continuity of $d$ implies $d(x^{*})=0$. Then $x^{*}$ is Pareto critical.
\end{proof}
\begin{rem}
	Comparing with Lemmas \ref{l3.1} and \ref{l4.1}, if the mapping $x\rightarrow d_{BB}(x)$ is continuous, which implies the assertion (d) in Lemma \ref{l4.1} directly. However, the continuity of $d_{BB}$ may be invalid due to variable $\alpha\in\mathbb R^{m}_{++}$. It is worth noting that assertion (d) in Lemma \ref{l4.1} is crucial for convergence analysis.
\end{rem}
According to Lemma \ref{l4.1}, the question remains of how a positive $\alpha^{k}_{i}$ may be obtained without the strict convexity of $F_{i}$. To guarantte $d^{k}_{BB}$ is a descent direction, we address it in two cases. If $\langle s^{k-1},y^{k-1}_{i}\rangle<0$, we apply the method in \citep{DA2015}, and set 
$$\alpha_{i}^{k}=\frac{\|y^{k-1}_{i}\|}{\|s^{k-1}\|}.$$
In the case  $\langle s^{k-1},y^{k-1}_{i}\rangle=0$, such as $F_{i}$ is linear, we set
\begin{equation}\label{epsi}
	\alpha^{k}_{i}=\alpha_{\min},
\end{equation}	
where $\alpha_{\min}$ is small positive constant. With this setting, (\ref{E4.1}) can be recast as follows:
\begin{align*}
	&\min\limits_{(t,d)\in\mathbb{R}\times\mathbb{R}^{n}}\ t+\frac{1}{2}\|d\|^{2},\\
	&\ \ \ \ \mathrm{ s.t.} \ \langle \nabla F_{i}(x),d\rangle \leq \alpha^{k}_{i}t,\ i\in[m].
\end{align*}
Hence, for linear objective $F_{i}$, as $\alpha_{\min}$ tends to $0$ the associated constraint can be rewritten as
$$\langle \nabla F_{i}(x),d\rangle \leq0.$$
In what follows, we will explain the transformation. At first, we consider the following quadratic problem:
\begin{align*}\tag{QP\_k}\label{QP(k)}
	&\min\limits_{(t,d)\in\mathbb{R}\times\mathbb{R}^{n}}\ t+\frac{1}{2}\|d\|^{2},\\
	&\ \ \ \ \mathrm{ s.t.} \ \langle \nabla F_{i}(x),d\rangle \leq t,\ i\in[m-1],\\
	&\ \ \ \ \ \ \ \ \ \langle k\nabla F_{m}(x),d\rangle \leq t.
\end{align*}
Next, we discuss the relation between $k$ and the solution of (\ref{QP(k)}).
\begin{prop}\label{p4.1}
	Let $(t_{k_{1}},d_{k_{1}})$ and $(t_{k_{2}},d_{k_{2}})$ be the solution of $\mathrm{(\ref{QP(k)})}$ with $k=k_{1},k_{2}$, respectively. If $k_{2}\geq k_{1}>0$, then $\|d_{k_1}\|\leq\|d_{k_{2}}\|$ and $t_{k_1}\geq t_{k_{2}}$.
\end{prop}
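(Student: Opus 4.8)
The plan is to reduce both assertions to a single monotonicity statement about the optimal value and then prove that statement by a feasibility comparison between the two subproblems.

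First I would observe that (\ref{QP(k)}) is exactly the subproblem (\ref{QP}) with the gradient $\nabla F_m(x)$ replaced by the scaled gradient $k\nabla F_m(x)$, all other data being unchanged. Consequently the entire KKT/strong-duality derivation leading to (\ref{E3.9}) and (\ref{E3.10}) carries over verbatim, yielding for each $k>0$ the two identities $\theta_k := t_k + \frac{1}{2}\|d_k\|^2 = -\frac{1}{2}\|d_k\|^2$ and $t_k = -\|d_k\|^2$. It therefore suffices to prove $\theta_{k_2}\le\theta_{k_1}$: from $-\frac{1}{2}\|d_{k_2}\|^2\le-\frac{1}{2}\|d_{k_1}\|^2$ one gets $\|d_{k_1}\|\le\|d_{k_2}\|$, and then $t_k=-\|d_k\|^2$ immediately gives $t_{k_1}=-\|d_{k_1}\|^2\ge-\|d_{k_2}\|^2=t_{k_2}$, so both claims follow at once.

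The core step is to show that the optimizer $(t_{k_1},d_{k_1})$ of the $k_1$-problem is feasible for the $k_2$-problem. The first $m-1$ constraints are literally identical in both problems, so only the last constraint must be checked. From the $m$-th constraint of the $k_1$-problem, namely $k_1\langle\nabla F_m(x),d_{k_1}\rangle\le t_{k_1}$, together with $t_{k_1}=-\|d_{k_1}\|^2\le0$ and $k_1>0$, I would deduce $\langle\nabla F_m(x),d_{k_1}\rangle\le0$. Since this inner product is non-positive and $k_2\ge k_1>0$, I then have $k_2\langle\nabla F_m(x),d_{k_1}\rangle\le k_1\langle\nabla F_m(x),d_{k_1}\rangle\le t_{k_1}$, which is precisely the last constraint of (\ref{QP(k)}) with $k=k_2$. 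Hence $(t_{k_1},d_{k_1})$ is feasible for the $k_2$-problem, and by optimality $\theta_{k_2}\le t_{k_1}+\frac{1}{2}\|d_{k_1}\|^2=\theta_{k_1}$, which completes the argument.

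The one genuine subtlety — and the step I would watch most carefully — is that the feasible region of (\ref{QP(k)}) does \emph{not} shrink monotonically as $k$ grows: the last constraint $k\langle\nabla F_m(x),d\rangle\le t$ tightens or loosens depending on the sign of $\langle\nabla F_m(x),d\rangle$, so a naive nestedness argument fails. The comparison above works only because at the $k_1$-optimum this inner product is forced to be non-positive by $t_{k_1}\le0$. I would make sure the sign analysis also covers the degenerate cases $d_{k_1}=0$ (where $x$ is Pareto critical and all inequalities hold trivially) and $\langle\nabla F_m(x),d_{k_1}\rangle=0$ (where the last constraint is inactive and feasibility for $k_2$ is immediate), since these are exactly the boundary situations where the strict sign could fail.
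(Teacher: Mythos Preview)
Your argument is correct and complete. It differs genuinely from the paper's proof, so a short comparison is warranted.

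The paper works on the dual side. It writes $d_{k_2}=-\sum_{i<m}\lambda_i^2\nabla F_i(x)-\lambda_m^2 k_2\nabla F_m(x)$ with $\lambda^2\in\Delta_m$, rewrites the $m$-th coefficient as $(\lambda_m^2 k_2/k_1)\,k_1$, and renormalizes by $C:=\sum_{i<m}\lambda_i^2+(k_2/k_1)\lambda_m^2\ge1$ to obtain a vector $\bar\lambda\in\Delta_m$. Since $\|d_{k_1}\|=\min_{\lambda\in\Delta_m}\|\sum_{i<m}\lambda_i\nabla F_i(x)+\lambda_m k_1\nabla F_m(x)\|$, this yields $\|d_{k_2}\|=C\,\|\sum_i\bar\lambda_i g_i\|\ge 1\cdot\|d_{k_1}\|$, and then $t_{k_1}\ge t_{k_2}$ from $t_k=-\|d_k\|^2$.

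You instead stay entirely in the primal: you use $t_{k_1}=-\|d_{k_1}\|^2\le0$ to force $\langle\nabla F_m(x),d_{k_1}\rangle\le0$, which makes $(t_{k_1},d_{k_1})$ feasible for the $k_2$-problem and gives $\theta_{k_2}\le\theta_{k_1}$ by optimality. This is shorter and avoids any manipulation of multipliers; your explicit discussion of why a naive ``feasible sets are nested'' argument fails, and why the sign of $t_{k_1}$ rescues it, is exactly the point that makes the primal route work. The one thing the paper's dual computation delivers slightly more directly is the strict-inequality refinement in the subsequent remark (strictness when $\lambda_m^2\neq0$ and $d_{k_2}\neq0$, since then $C>1$); extracting the same conclusion from your primal argument is possible but requires an extra line identifying when $(t_{k_1},d_{k_1})$ fails to be optimal for the $k_2$-problem.
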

\begin{proof}
	In view of equality (\ref{E3.8}), there exist $\lambda^{1},\lambda^{2}\in\Delta_{m}$ such that
	$$d_{k_{1}}=-\sum^{m-1}_{i=1}\lambda_{i}^{1}\nabla F_{i}(x)-\lambda^{1}_{m}k_{1}\nabla F_{m}(x),$$
	and
	$$d_{k_{2}}=-\sum^{m-1}_{i=1}\lambda_{i}^{2}\nabla F_{i}(x)-\lambda^{2}_{m}k_{2}\nabla F_{m}(x).$$
	Then, a direct calculation gives
	\begin{align*}
		\|d_{k_{2}}\|&=\|\sum^{m-1}_{i=1}\lambda_{i}^{2}\nabla F_{i}(x)+\lambda^{2}_{m}k_{2}\nabla F_{m}(x)\|\\
		&=\|\sum^{m-1}_{i=1}\lambda_{i}^{2}\nabla F_{i}(x)+\lambda^{2}_{m}\frac{k_{2}}{k_{1}}k_{1}\nabla F_{m}(x)\|\\
		&=(\sum^{m-1}_{i=1}\lambda_{i}^{2}+\frac{k_{2}}{k_{1}}\lambda^{2}_{m})\|\sum^{m-1}_{i=1}\frac{\lambda_{i}^{2}}{\sum^{m-1}_{i=1}\lambda_{i}^{2}+\frac{k_{2}}{k_{1}}\lambda^{2}_{m}}\nabla F_{i}(x)+\frac{\lambda_{m}^{2}}{\sum^{m-1}_{i=1}\lambda_{i}^{2}+\frac{k_{2}}{k_{1}}\lambda^{2}_{m}}k_{1}\nabla F_{m}(x)\|\\
		&\geq\|\sum^{m-1}_{i=1}\lambda_{i}^{1}\nabla F_{i}(x)+\lambda^{1}_{m}k_{1}\nabla F_{m}(x)\|\\
		&=\|d_{k_{1}}\|,
	\end{align*}
	where the inequality is given by $\sum^{m-1}_{i=1}\lambda_{i}^{2}+\frac{k_{2}}{k_{1}}\lambda^{2}_{m}\geq1$ and $\|d_{k1}\|=\arg\min\limits_{\lambda\in\Delta_{m}}\|\sum^{m-1}_{i=1}\lambda_{i}\nabla F_{i}(x)+\lambda_{m}k_{1}\nabla F_{m}(x)\|$. Consequently, $t_{k_1}\geq t_{k_{2}}$ follows by (\ref{e3.10}). The proof is complete.
\end{proof}
\begin{rem}\label{r4.3}
	If $\lambda^{2}_{m}\neq0$ and $\|d_{k_{2}}\|\neq0$, the inequalities hold strictly in Proposition \ref{p4.1}. Hence, the influence brought by the linear objective can be relieved by enlarging its gradient in direction-finding subproblems.
\end{rem}
As the above statement is abstract, we exemplify the remark as follows.
\begin{exam}
	Consider the multiobjective optimization problem:
	$$\min\limits_{x\in \mathbb{R}^{2}}(f_{1}(x),f_{2}(x),f_{3}(x))$$
	where $f_{1}(x)=5x_{1}^{2}+10x_{2}^{2}$, $f_{2}(x)=2(x_{1}-2)^{2}+5x_{2}^{2}$ and $f_{3}(x)= - x_{1}$. By simple calculations, we have
	$$\nabla f_{1}(x)=\begin{pmatrix} 10x_{1}\\20x_{2} \end{pmatrix},\ \nabla f_{2}(x)=\begin{pmatrix} 4(x_{1}-2)\\10x_{2} \end{pmatrix},\ \nabla f_{3}(x)=\begin{pmatrix} -1\\0 \end{pmatrix}.$$
	Figure 1 shows the descent directions at $x=\begin{pmatrix} 1\\-1 \end{pmatrix}$ with $k=1,5,29,+\infty$. In view of Figure 1, the descent direction obtained by $\mathrm{(\ref{QP(k)})}$ approximates to $-\nabla f_{2}(x)$ as $k$ increases. If $k\geq29$, the descent direction is the same as the one without the linear objective function, i.e., the influence brought by linear objective function can be relieved by increasing $k$ in $\mathrm{(\ref{QP(k)})}$.
	\begin{figure}[H]
		\centering
		\centering
		\subfigure[$k=1$]{
			\includegraphics[scale=0.5]{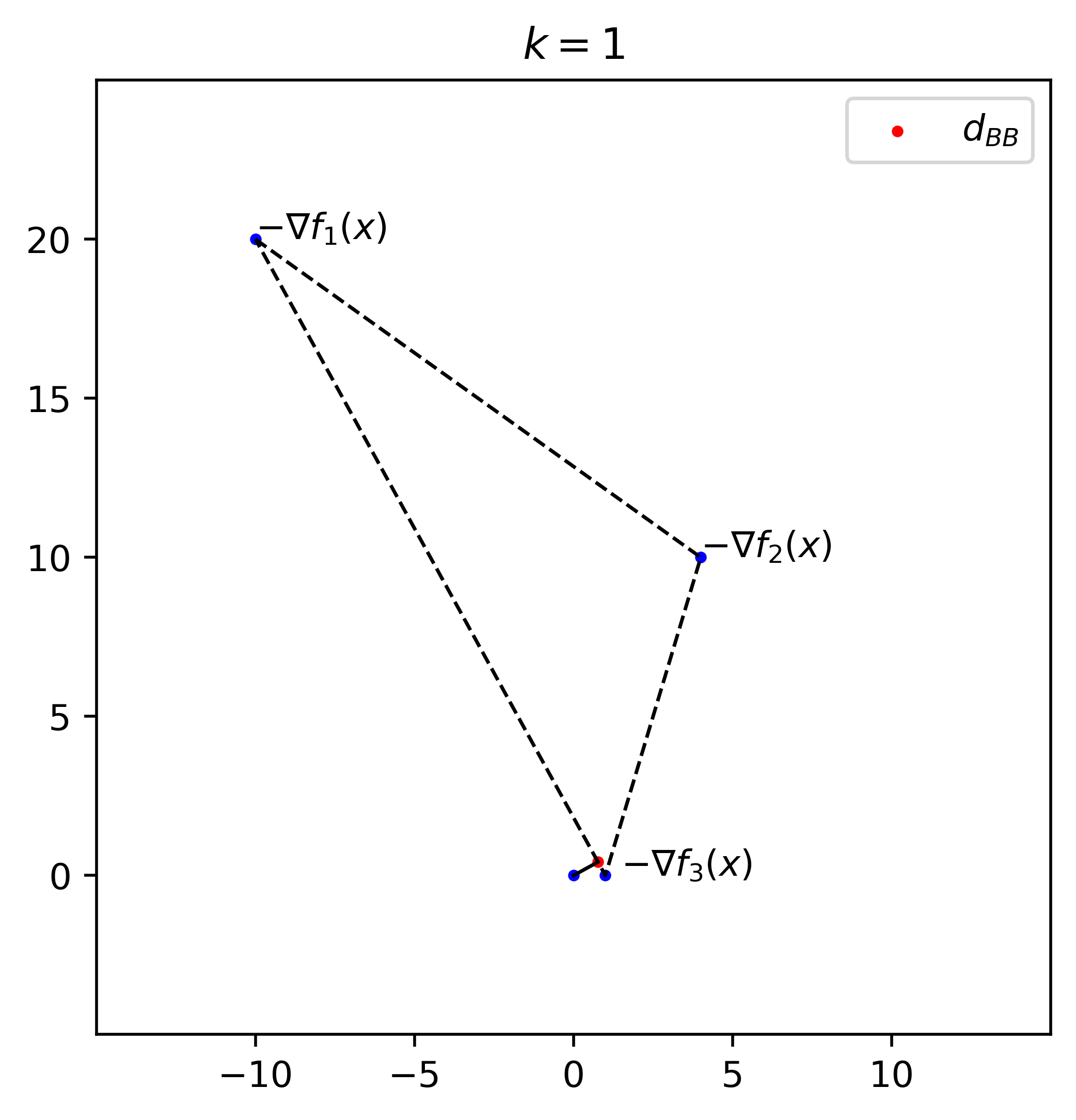}
		}
		\subfigure[$k=5$]{
			\includegraphics[scale=0.5]{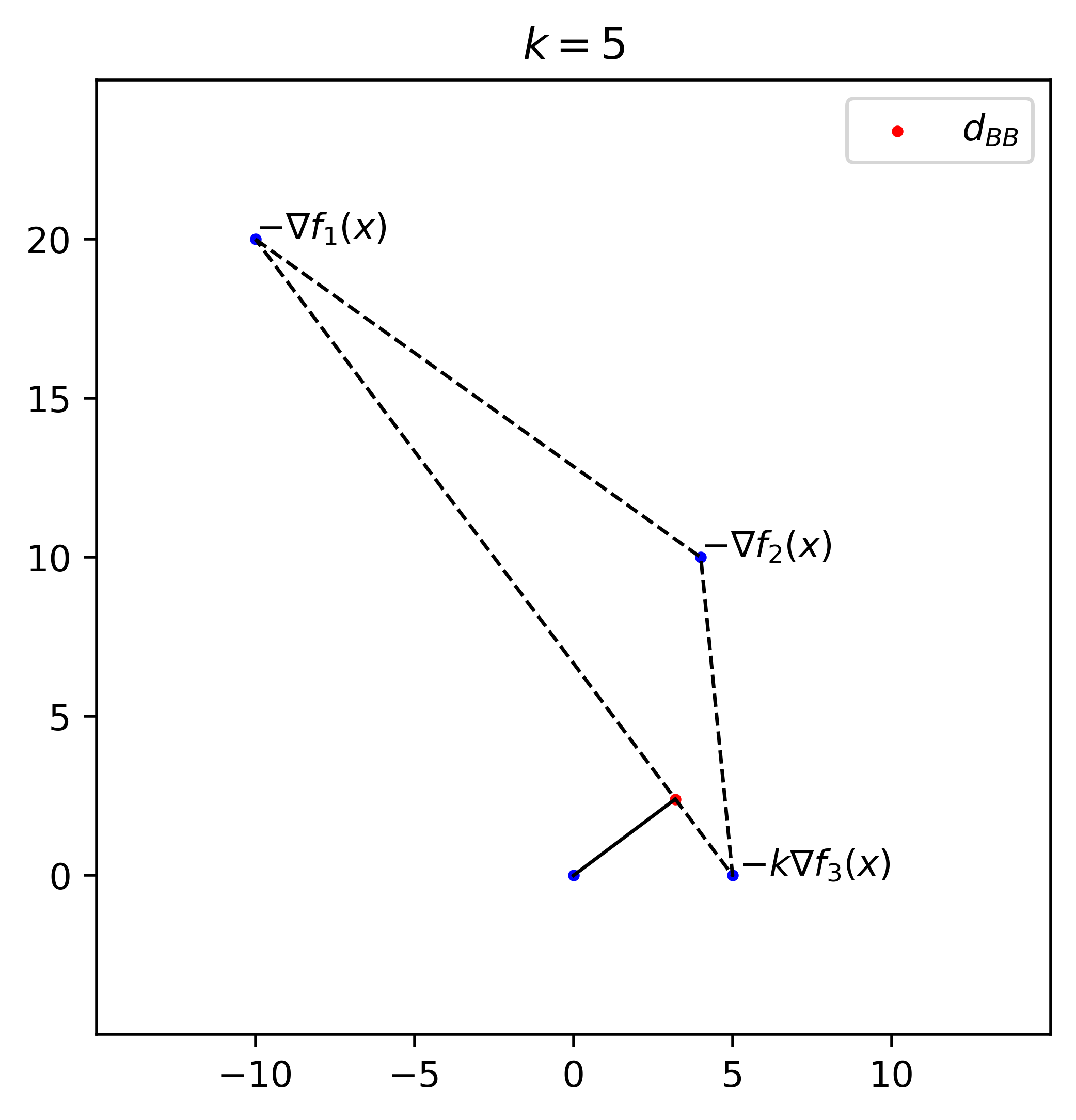} 
		}
		\subfigure[$k=29$]{
			\includegraphics[scale=0.5]{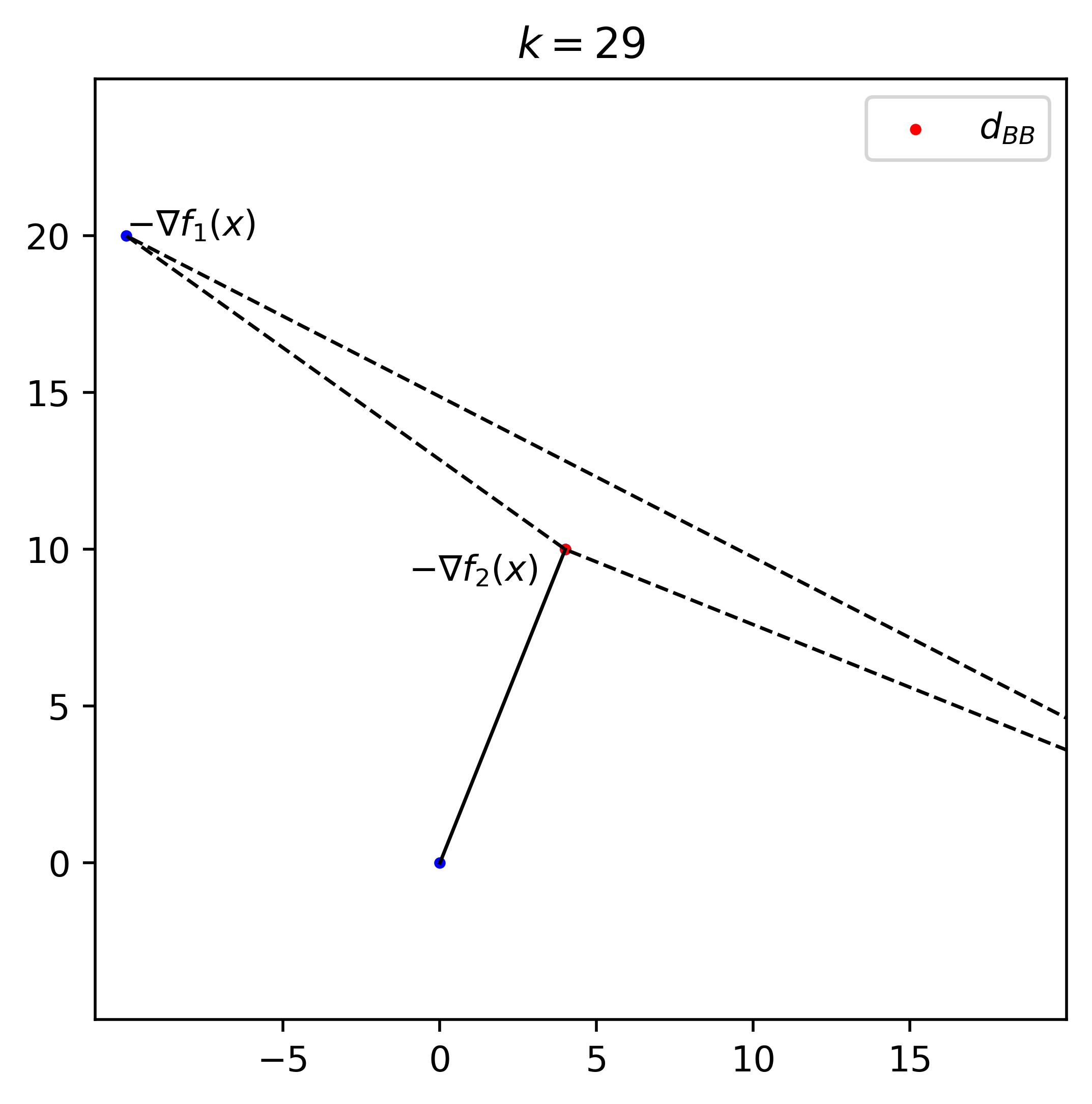}
		}
		\subfigure[$k=\infty$]{
			\includegraphics[scale=0.5]{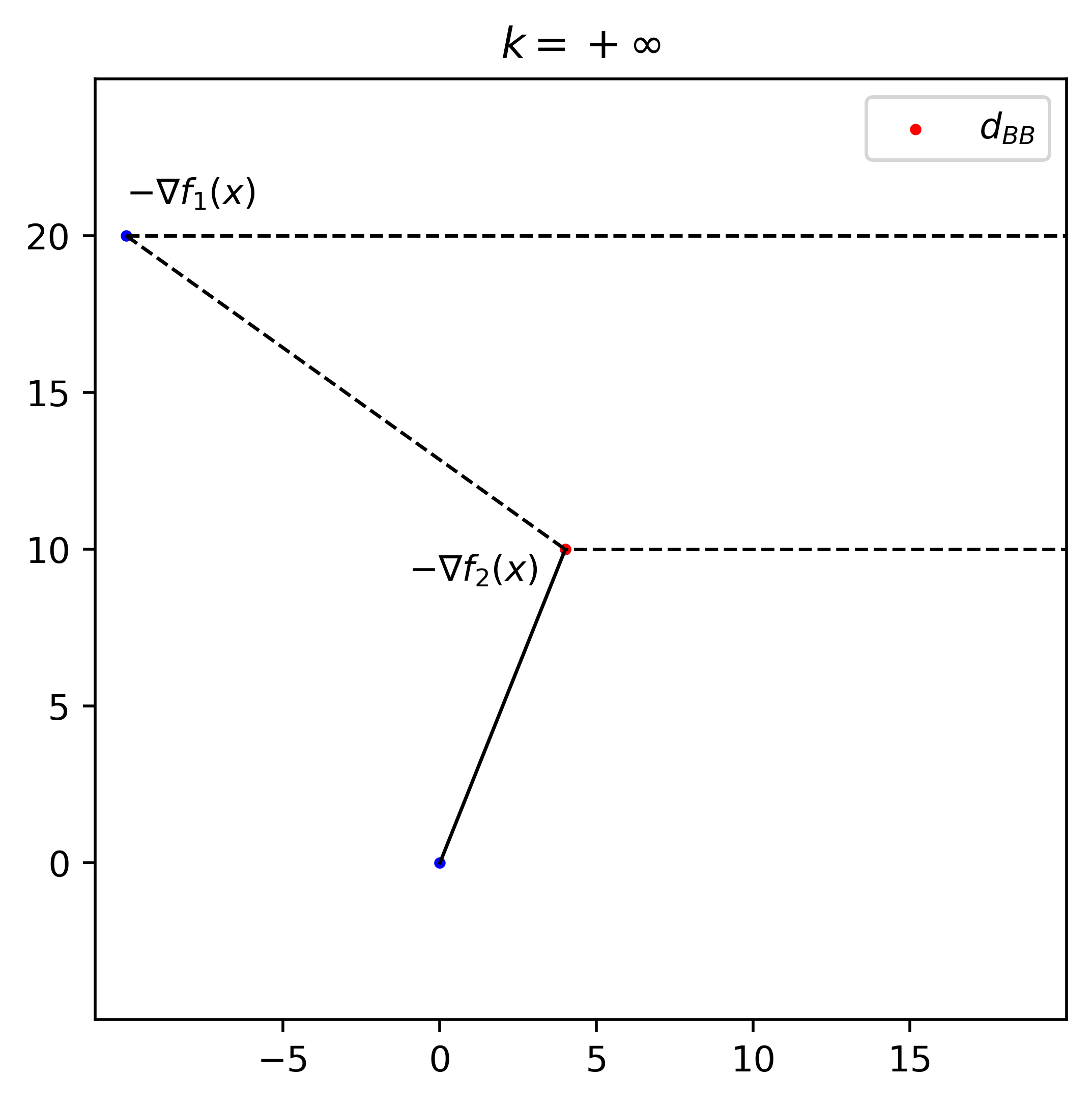} 
		}
		\caption{The descent directions obtained by solving (\ref{QP(k)}) with $k=1,5,29,\infty$.}	
	\end{figure}
\end{exam}

Recall that $\alpha^{k}$ used in SOPs always has an upper bound. In what follows, we show that the strategy is also required in MOPs. On the one hand, the upper bound of $\alpha^{k}$ is crucial to the proof of assertion (d) in Lemma \ref{l4.1}. On the other hand, since a stopping criterion is usually set based on $\|d^{k}_{BB}\|$ in practice, the large $\alpha_{i}^{k}$ may lead to a fake Pareto critical point due to $\|d^{k}_{BB}\|\leq\|\frac{\nabla F_{i}(x^{k})}{\alpha^{k}_{i}}\|$.
\par To sum up, $\alpha^{k}_{i}$ is set as follows:
\begin{equation}\label{alpha_k}
	\alpha^{k}_{i}=\left\{
	\begin{aligned}
		&\max\left\{\alpha_{\min},\min\left\{\frac{\langle s^{k-1},y^{k-1}_{i}\rangle}{\langle s^{k-1},s^{k-1}\rangle},\ \alpha_{\max}\right\}\right\},\ \langle s^{k-1},y^{k-1}_{i}\rangle>0, \\
		&\max\left\{\alpha_{\min},\min\left\{\frac{\|y^{k-1}\|}{\|s^{k-1}\|},\ \alpha_{\max}\right\}\right\},\ \ \ \ \ \ \ \ \langle s^{k-1},y^{k-1}_{i}\rangle<0,\\
		& \alpha_{\min}, \ \ \ \ \ \ \ \ \ \ \ \ \ \ \ \ \ \ \ \ \ \ \ \ \ \ \ \ \ \ \ \ \ \ \ \ \ \ \ \ \ \ \ \ \ \ \ \ \ \ \langle s^{k-1},y^{k-1}_{i}\rangle=0,
	\end{aligned}
	\right.
\end{equation}
for all $i\in[m]$, where $\alpha_{\min}$ is a small positive constant and $\alpha_{\max}$ is a large positive constant. In this setting, $0<\alpha^{k}_{i}\leq\alpha_{\max}$ for all $i\in[m]$, which implies all assertions in Lemma \ref{l4.1} are vaild.

To guarantee the global convergence, we incorporate line search techniques into BBDMO. Besides monotone line search technique such as Algorithm \ref{alg1}, we also consider the following two nonmonotone line search techniques:
\begin{algorithm}  
	\caption{\ttfamily Max-type\_nonmonotone\_line\_search} 
	\KwData{$x^{k}\in\mathbb{R}^{n},d^{k}_{BB}\in\mathbb{R}^{n},JF(x^{k})\in\mathbb{R}^{m\times n},\sigma,\gamma\in(0,1),\beta=1$, a nonnegative integer $M$.}{$C^{k}_{i}=\max\limits_{0\leq j\leq \min(k,M)}F_{i}(x^{k-j}),\ i\in[m]$}\\
	\While{$F(x^{k}+\beta d^{k}_{BB})- C^{k} \nleq \sigma\beta JF(x^{k})d^{k}_{BB}$}{~\\$\beta\leftarrow \gamma\beta$ }{$\beta^{k}\leftarrow \beta$}  
\end{algorithm}
\begin{algorithm}  
	\caption{\ttfamily Average-type\_nonmonotone\_line\_search} 
	\KwData{$x^{k}\in\mathbb{R}^{n},d^{k}_{BB}\in\mathbb{R}^{n},JF(x^{k})\in\mathbb{R}^{m\times n},\sigma,\gamma,\eta\in(0,1),\beta=1$, $q^{k-1}$, $C^{k-1}$. }{$q^{k}=\eta q^{k-1}+1$\\
		$C^{k}=\frac{\eta q^{k-1}}{q^{k}}C^{k-1}+\frac{1}{q^{k}}F(x^{k})$}\\
	\While{$F(x^{k}+\beta d^{k}_{BB})- C^{k} \nleq \sigma\beta JF(x^{k})d^{k}_{BB}$}{~\\$\beta\leftarrow \gamma\beta$ }{$\beta^{k}\leftarrow \beta$}  
\end{algorithm}

Next, we give the lower and upper bounds of stepsize for BBDMO with different line search techniques.
\begin{prop}
	Assume $\nabla F_{i}$ is Lipschitz continuous with constant $L_{i}$ and $F_{i}$ is strongly convex with modulus $\mu_{i} ,\ i\in[m]$, and let $\sigma\leq\frac{1}{2}$ in line search. Then the stepsize generated by BBDMO with either monotone or nonmonotone line search satisfies $\min\{1,\beta_{\min}\}\leq\beta^{k}\leq1$, where $\beta_{\min}:=\min\{\frac{2\gamma(1-\sigma)\mu_{i}}{L_{i}}:i\in[m]\}$.
\end{prop}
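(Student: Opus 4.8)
The plan is to treat the two bounds separately; the upper bound is immediate and the lower bound is the substantive part. For $\beta^k\le 1$: every line search (Algorithm \ref{alg1} and the two nonmonotone variants) initializes $\beta=1$ and only multiplies $\beta$ by $\gamma\in(0,1)$, so the accepted stepsize never exceeds $1$.

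For the lower bound I would first extract the descent information carried by $d^k_{BB}$. Repeating the KKT derivation that produced (\ref{E3.10}) for the scaled subproblem (\ref{E4.1}) yields $\langle\nabla\hat F_i(x^k),d^k_{BB}\rangle\le-\|d^k_{BB}\|^2$ for every $i\in[m]$, i.e. $\langle\nabla F_i(x^k),d^k_{BB}\rangle\le-\alpha^k_i\|d^k_{BB}\|^2$ for all $i$. I would then bound $\alpha^k_i$ from below. With $s^{k-1}=x^k-x^{k-1}$ and $y^{k-1}_i=\nabla F_i(x^k)-\nabla F_i(x^{k-1})$, $\mu_i$-strong convexity together with $L_i$-Lipschitzness give $\mu_i\le\langle s^{k-1},y^{k-1}_i\rangle/\|s^{k-1}\|^2\le L_i$; in particular $\langle s^{k-1},y^{k-1}_i\rangle>0$, so $\alpha^k_i$ is chosen from the first branch of (\ref{alpha_k}). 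Since $\alpha_{\max}\ge\max_i\mu_i$, the safeguarding $\max/\min$ cannot push the value below $\mu_i$, whence $\alpha^k_i\ge\mu_i$ for every $i$.

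These two facts combine through the descent lemma. Setting $g_i:=\langle\nabla F_i(x^k),d^k_{BB}\rangle$, $L_i$-Lipschitzness gives $F_i(x^k+\beta d^k_{BB})-F_i(x^k)\le\beta g_i+\tfrac{L_i}{2}\beta^2\|d^k_{BB}\|^2$, and demanding that this be at most $\sigma\beta g_i$ reduces, after dividing by $\beta>0$ and using $g_i\le-\alpha^k_i\|d^k_{BB}\|^2$, to $\beta\le 2(1-\sigma)\alpha^k_i/L_i$. Minimizing over $i$ and inserting $\alpha^k_i\ge\mu_i$ shows that the monotone Armijo test holds for every $\beta\le\bar\beta:=\min_i 2(1-\sigma)\mu_i/L_i$. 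A standard backtracking argument then gives $\beta^k\ge\min\{1,\gamma\bar\beta\}=\min\{1,\beta_{\min}\}$: if the loop does not already stop at $\beta=1$, the last rejected trial $\beta^k/\gamma$ must exceed $\bar\beta$, so $\beta^k>\gamma\bar\beta$. The hypothesis $\sigma\le\tfrac12$ is what makes this informative: when the objectives are well conditioned ($\mu_i\approx L_i$) it forces $\bar\beta\ge 2(1-\sigma)\ge 1$, so the full unit step is accepted, which is precisely the improvement over the magnitude-dependent bound of Lemma \ref{l3.3}.

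To cover the nonmonotone schemes it suffices to note $C^k_i\ge F_i(x^k)$ for both reference values (immediate for the max-type; for the average-type, $C^k$ is a convex combination of $C^{k-1}$ and $F(x^k)$, and the previous line search guarantees $F(x^k)\le C^{k-1}$, so the combination stays above $F(x^k)$). Consequently any $\beta$ passing the monotone test $F_i(x^k+\beta d^k_{BB})-F_i(x^k)\le\sigma\beta g_i$ also passes $F_i(x^k+\beta d^k_{BB})-C^k_i\le\sigma\beta g_i$, so the same threshold $\bar\beta$, and hence the same lower bound, carries over verbatim. I expect the main obstacle to be the uniform estimate $\alpha^k_i\ge\mu_i$: one must verify that the Barzilai–Borwein ratio falls in $[\mu_i,L_i]$ for every $i$ and is not clipped below $\mu_i$ by the safeguard, so that the per-objective thresholds can be merged into a single $\bar\beta$; the first iteration, where $s^{k-1}$ is unavailable, needs a separate (minor) convention.
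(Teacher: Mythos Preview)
Your proposal is correct and follows essentially the same route as the paper: obtain $\langle\nabla F_i(x^k),d^k_{BB}\rangle\le-\alpha^k_i\|d^k_{BB}\|^2$ from the KKT conditions of (\ref{E4.1}), use strong convexity to bound $\alpha^k_i\ge\mu_i$, combine with the descent lemma and the backtracking argument to get $\beta^k\ge\gamma\min_i 2(1-\sigma)\mu_i/L_i$, and reduce the nonmonotone cases to the monotone one via $C^k\ge F(x^k)$. The paper phrases the backtracking step in the direct form (the rejected trial $\beta^k/\gamma$ fails Armijo for \emph{some} $i$, forcing $\beta^k\ge 2\gamma(1-\sigma)\alpha^k_i/L_i$ for that $i$) rather than via your sufficient-condition formulation, but the two are equivalent; like you, the paper's argument does not visibly invoke the hypothesis $\sigma\le\tfrac12$.
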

\begin{proof}
	It is sufficient to prove the assertion with monotone line search since $\beta^{k}\leq1$ for all line search techniques and stepsize generated by nonmonotone line search is greater than the one obtained by nonmonotone line search.
	In view of monotone line search, if $\beta^{k}<1$, then backtracking is conducted, so that
	\begin{equation}\label{E4.4}
		F_{i}(x^{k}+\frac{\beta^{k}}{\gamma}d^{k}_{BB})-F_{i}(x^{k})>\sigma\frac{\beta^{k}}{\gamma}\langle\nabla F_{i}(x^{k}),d^{k}_{BB}\rangle
	\end{equation}
	for some $i\in[m]$. Since $\nabla F_{i}$ is Lipschitz continuous, we have
	\begin{equation}\label{E4.5}
		F_{i}(x^{k}+\frac{\beta^{k}}{\gamma}d^{k}_{BB})-F_{i}(x^{k})\leq\frac{\beta^{k}}{\gamma}\langle\nabla F_{i}(x^{k}),d^{k}_{BB}\rangle + \frac{L_{i}}{2}\|\frac{\beta^{k}}{\gamma}d^{k}_{BB}\|^{2}
	\end{equation}
	for all $i\in[m]$. It follows by (\ref{E4.4})-(\ref{E4.5}) and $\langle\nabla F_{i}(x^{k}),d^{k}_{BB}\rangle\leq-\alpha^{k}_{i}\|d^{k}_{BB}\|^{2}$ that
	\begin{equation}\label{E4.6}
		\beta^{k}\geq\frac{2\gamma(1-\sigma)\alpha_{i}^{k}}{L_{i}}
	\end{equation}
	for some $i\in[m]$. Since $F_{i}$ is strongly convex with modulus $\mu_{i}$, we obtain the following bound:
	$$\langle \nabla F_{i}(x^{k})-\nabla F_{i}(x^{k-1}),x^{k}-x^{k-1}\rangle\geq\mu_{i}\|x^{k}-x^{k-1}\|^{2}.$$
	Thus, 
	\begin{equation}\label{E4.7}
		\alpha^{k}_{i}\geq\mu_{i}
	\end{equation}
	for all $i\in[m]$. We use (\ref{E4.6}) and (\ref{E4.7}) to get
	\begin{equation}\label{E4.8}
		\beta^{k}\geq\min\{\frac{2\gamma(1-\sigma)\mu_{i}}{L_{i}}:i\in[m]\}.
	\end{equation}
	Note that the above analysis is under the assumption $\beta^{k}<1$, then
	\begin{equation}\label{E4.9}
		\beta^{k}\geq\min\{1,\beta_{\min}\}.
	\end{equation}
	The upper bound of $\beta^{k}$ is obtained by inequality (\ref{E4.7}) and the same arguments as in the proof of Lemma \ref{l3.4}.
\end{proof}
The complete procedure of BBDMO is given as follows.
\begin{algorithm}  
	\caption{{\ttfamily{Barzilai-Borwein\_descent\_method\_for\_MOPs}}}
	\SetAlgoLined  
	\KwData{$x^{0}\in\mathbb{R}^{n}$}
	{for $k=0$, do the iteration as \ttfamily{steepest\_descent\_method\_for\_MOPs} }\\
	\For{$k=1,...$}{Update $\alpha^{k}_{i}$ as (\ref{alpha_k}),
		$\nabla \hat{F}_{i}(x^{k})\leftarrow \frac{1}{\alpha^{k}_{i}}\nabla F_{i}(x^{k})$,\ $i\in[m]$\\
		$\lambda^{k}\leftarrow \mathop{\arg\min}\limits_{\lambda\in\Delta_{m}}\frac{1}{2} \|\sum\limits_{i\in[m]}\lambda_{i}\nabla \hat{F}_{i}(x^{k})\|^{2}$\\  
		$d^{k}_{BB}\leftarrow-\sum\limits_{i\in[m]}\lambda_{i}^{k}\nabla \hat{F}_{i}(x^{k})$\\
		\eIf{$d^{k}_{BB}=0$}{~\\ {\bf{return}} Pareto critical point $x^{k}$ }{$\beta^{k}\leftarrow$ {\ttfamily line\_search}$(x^{k},d^{k}_{BB},JF(x^{k}))$\\
			$x^{k+1}\leftarrow x^{k}+\beta^{k}d^{k}_{BB}$}}  
	\label{alg5}
\end{algorithm}
\begin{rem}
	Comparing the lower and upper bounds of stepsize obtained by SDMO and BBDMO. If objective functions are not ill-conditioned, BBDMO will achieve a relatively large stepsize. However, it is not valid for SDMO. A direct counter-example can be Example \ref{exam1}. In which, the condition numbers of $\nabla^{2}f_{1}(x^{k})$ and $\nabla^{2}f_{2}(x^{k})$ are 1, but the stepsize is relatively small. 
\end{rem}
\subsection{Global Convergence}
In Algorithm \ref{alg5}, we see that BBDMO terminates with a Pareto critical point in a finite number of iterations or generates an infinite sequence. We will suppose that BBDMO produces an infinite sequence of noncritical points in the sequel. The main goal of this subsection is to show the convergence property of BBDMO with different line search techniques.
\begin{theo}
	Assume that $\Omega=\{x:F(x)\leq F(x^{0})\}$ is a bounded set. Let $\{x^{k}\}$ be the sequence generated by Algorithm \ref{alg5}. Then every accumulation point of $\{x^{k}\}$ is Pareto critical point.
\end{theo}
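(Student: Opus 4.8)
The plan is to combine the sufficient decrease produced by the line search with the boundedness of $\Omega$ in order to extract a subsequence along which $d^k_{BB}\to 0$, and then invoke assertion (d) of Lemma \ref{l4.1} (which, by the remark following (\ref{alpha_k}), stays valid under the clamping $0<\alpha^{k}_{i}\le\alpha_{\max}$, so no convexity of $F$ is needed). First I would record the elementary inequality coming from the direction-finding subproblem (\ref{E4.1}): since its optimal value is $-\tfrac12\|d^k_{BB}\|^2$ and each constraint reads $\langle\nabla\hat F_i(x^k),d\rangle\le t$, every index obeys $\langle\nabla F_i(x^k),d^k_{BB}\rangle\le-\alpha^{k}_{i}\|d^k_{BB}\|^2\le-\alpha_{\min}\|d^k_{BB}\|^2$. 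Substituting this into the monotone Armijo condition gives $F_i(x^{k+1})-F_i(x^k)\le-\sigma\alpha_{\min}\beta^k\|d^k_{BB}\|^2\le0$ for all $i\in[m]$, so $\{F(x^k)\}$ is componentwise non-increasing and $x^k\in\Omega$ for every $k$; as $\Omega$ is bounded, its closure is compact, $F$ is bounded below on it, and $\{x^k\}$ possesses accumulation points. For the max- and average-type searches I would first verify, by the standard Grippo--Lampariello--Lucidi, resp. Zhang--Hager, induction, that each reference value $C^k_i$ is non-increasing and bounded above by $F_i(x^0)$, which again confines $\{x^k\}$ to $\Omega$.

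Second, I would establish $\beta^k\|d^k_{BB}\|^2\to0$. For the monotone search this is immediate by telescoping, since $\sum_k\sigma\alpha_{\min}\beta^k\|d^k_{BB}\|^2\le F_i(x^0)-\inf_k F_i(x^k)<\infty$. For the average-type search the same conclusion follows from the Zhang--Hager identity $C^{k+1}_i=C^k_i+(F_i(x^{k+1})-C^k_i)/q^{k+1}$ together with $q^{k+1}\le 1/(1-\eta)$, which makes the decrements summable; for the max-type search the windowing argument yields the analogous limit.

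Third --- and this is the crux --- I would fix an accumulation point $x^*$ with $x^k\stackrel{\mathcal{K}}{\longrightarrow}x^*$ and split on $\liminf_{k\in\mathcal{K}}\|d^k_{BB}\|$. If this $\liminf$ is $0$, a further subsequence has $d^k_{BB}\stackrel{\mathcal{K}}{\longrightarrow}0$ while $x^k\stackrel{\mathcal{K}}{\longrightarrow}x^*$, and Lemma \ref{l4.1}(d) declares $x^*$ Pareto critical. The hard case is $\liminf_{k\in\mathcal{K}}\|d^k_{BB}\|=\rho>0$: then $\beta^k\|d^k_{BB}\|^2\to0$ forces $\beta^k\stackrel{\mathcal{K}}{\longrightarrow}0$, so backtracking was active and, for some $i_k\in[m]$,
$$F_{i_k}\!\left(x^k+\tfrac{\beta^k}{\gamma}d^k_{BB}\right)-F_{i_k}(x^k)>\sigma\tfrac{\beta^k}{\gamma}\langle\nabla F_{i_k}(x^k),d^k_{BB}\rangle,$$
where I use $C^k_{i_k}\ge F_{i_k}(x^k)$ to reduce the nonmonotone failure inequality to this monotone-looking form. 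Passing to a further subsequence I may assume $i_k\equiv\bar i$, $d^k_{BB}\to\bar d$ with $\|\bar d\|\ge\rho>0$ (the $d^k_{BB}$ are bounded because the gradients are bounded on the compact closure of $\Omega$ and $\alpha^{k}_{i}\ge\alpha_{\min}$), and $\alpha^{k}_{\bar i}\to\bar\alpha\in[\alpha_{\min},\alpha_{\max}]$. Dividing by $\beta^k/\gamma$, applying the mean value theorem, and letting $k\to\infty$ gives $\langle\nabla F_{\bar i}(x^*),\bar d\rangle\ge\sigma\langle\nabla F_{\bar i}(x^*),\bar d\rangle$, hence $\langle\nabla F_{\bar i}(x^*),\bar d\rangle\ge0$ since $\sigma<1$; but the subproblem bound passes to the limit as $\langle\nabla F_{\bar i}(x^*),\bar d\rangle\le-\bar\alpha\|\bar d\|^2\le-\alpha_{\min}\rho^2<0$, a contradiction. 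Thus the second case is impossible and $x^*$ is Pareto critical.

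The main obstacle I anticipate is precisely this last regime $\beta^k\to0$, handled uniformly across the three line searches. The delicate points are the reduction of the nonmonotone failure to the monotone form via $C^k_i\ge F_i(x^k)$, securing the summable-decrease estimate for the max-type rule through the windowing argument, and checking that the limiting direction $\bar d$ is nonzero so that the subproblem bound $-\bar\alpha\|\bar d\|^2$ is strictly negative.
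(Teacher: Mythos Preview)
Your proposal is correct and, in fact, considerably more explicit than the paper's own argument. The paper does not carry out a self-contained proof: for the Armijo case it simply states that the argument is ``similar to \citep[Theorem 4.1]{FS2000}'', and for the two nonmonotone schemes it merely checks that the pair $(d^k_{BB},\hat\theta_{sd}(x^k))$ satisfies the abstract direction/merit hypotheses of \citep[Assumption~5]{MF2019} (namely $\max_i\langle\nabla\hat F_i(x^k),d^k_{BB}\rangle\le -2|\hat\theta_{sd}(x^k)|$ and a companion bound on $\|d^k_{BB}\|$) and then invokes \citep[Theorem~6]{MF2019} as a black box.

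What you do differently is unfold those black boxes: you derive the uniform sufficient-decrease estimate $\langle\nabla F_i(x^k),d^k_{BB}\rangle\le-\alpha_{\min}\|d^k_{BB}\|^2$ directly from the subproblem KKT system, telescope to obtain $\beta^k\|d^k_{BB}\|^2\to0$, and then run the classical ``$\liminf\|d^k\|=0$ vs.\ $\beta^k\to0$'' dichotomy, closing the second branch with the mean-value/Armijo-failure contradiction. Two points are worth highlighting. First, your use of Lemma~\ref{l4.1}(d) in the first branch is exactly the right substitute for the continuity of $x\mapsto d(x)$ that Fliege--Svaiter rely on but which is unavailable here; the paper's remark after Lemma~\ref{l4.1} flags precisely this issue, and your argument handles it cleanly. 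Second, your reduction of the nonmonotone failure inequality to the monotone form via $C^k_i\ge F_i(x^k)$ is valid for both schemes (trivially for max-type, and by the standard Zhang--Hager induction $F(x^{k+1})\le C^{k+1}\le C^k$ for average-type), so the single contradiction argument you wrote covers all three line searches uniformly. The only parts you leave at the level of a sketch---the GLL windowing argument giving summable decrease for max-type and the inductive containment $x^k\in\Omega$---are standard and pose no obstacle.
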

\begin{proof}
	Since all assertions in Lemma \ref{l4.1} is valid, if Armijo line search is used in Algorithm \ref{alg5}, then the proof is similar to \citep[Theorem 4.1]{FS2000}. Instead, if max-type or average-type nonmonotone line search is conducted in Algorithm \ref{alg5}. Denote $\hat{\theta}_{sd}(x^{k})$ the optimal value of (\ref{E4.1}) at $x=x^{k}$. From (\ref{E3.9}) and (\ref{E3.10}), we conclude that
	$$\max\limits_{i\in[m]}\{ \langle \nabla \hat{F}_{i}(x^{k}),d^{k}_{BB}\rangle\}\leq-2|\hat{\theta}_{sd}(x^{k})|,$$
	and
	$$\|d^{k}_{BB}\|\leq2|\hat{\theta}_{sd}(x^{k})|.$$
	This implies that \citep[Assumption 5]{MF2019} holds with $\Gamma_{1}=2$ and $\Gamma_{2}=2$. Then the assertion can be obtained by using the same arguments as in the proof of \citep[Theorem 6]{MF2019}.
\end{proof}

\section{Numerical Results} 
In this section, we present some
numerical results and demonstrate the numerical performance of BBDMO for different problems. Some comparisons with SDMO and BBMO are presented to show the efficiency of BBDMO. All numerical experiments were implemented in Python 3.7 and executed on a personal computer equipped with Intel Core i7-11390H, 3.40 GHz processor, and 16 GB of RAM.
\par The descent direction is obtained by solving the dual problem. Due to the unit simplex constraint, the dual problem is easy to solve by Frank-Wolfe/condition gradient method. For BBDMO, we set $\alpha_{\min}=10^{-3}$ and $\alpha_{\max}=10^{3}$ in (\ref{alpha_k}). In line search, we set $\sigma=0.1$, $\gamma=0.5$. Besides, we set $M=10$ in max-type nonmonotone line search, $\eta=0.8,\ q^{0}=1$ in average-type nonmonotone line search. Recall the case $\alpha^{k}\leq0$ is not considered in BBMO , then we apply the similar strategy as in (\ref{alpha_k}). In order to guarantee algorithms terminate after finite iterations, we use the stopping criterion $\|d\|<10^{-4}$ in all tested algorithms. The maximum number of iterations is set to 500. The tested algorithms are executed on several test problems, and problem illustration is given in Table \ref{tab1}. The dimension of variables and objective functions are presented in the second and third columns, respectively. $x_{L}$ and $x_{U}$ represent lower bounds and upper bounds of variables, respectively. Each problem is computed 200 times with the same initial points for different tested algorithms. Initial points are randomly selected in the internals of given lower bounds and upper bounds. The averages of 200 runs record the number of iterations, number of function evaluations, CPU time and stepsize. 
\begin{table}[t]
	\centering
	\resizebox{.65\columnwidth}{!}{
		\begin{tabular}{llllll}
			\hline
			Problem   & $n$   & $m$ & $x_{L}$                & $x_{U}$               & Reference \\ \hline
			Imbalance1 & 2   & 2 & {[}-2,-2{]}       & {[}2,2{]}        &     --      \\
			Imbalance2 & 2   & 2 & {[}-2,-2{]}       & {[}2,2{]}        &    --       \\
			JOS1a     & 50  & 2 & -{[}2,...,2{]}    & {[}2,...,2{]}    &     \citep{JO2001}      \\
			JOS1b     & 100 & 2 & -{[}2,...,2{]}    & {[}2,...,2{]}    &      \citep{JO2001}      \\
			JOS1c     & 100 & 2 & -50{[}1,...,1{]}  & 50{[}1,...,1{]}  &     \citep{JO2001}       \\
			JOS1d     & 100 & 2 & -100{[}1,...,1{]} & 100{[}1,...,1{]} &    \citep{JO2001}       \\
			WIT1      & 2   & 2 & {[}-2,-2{]}       & {[}2,2{]}        &    \citep{W2012}        \\
			WIT2      & 2   & 2 & {[}-2,-2{]}       & {[}2,2{]}        &       \citep{W2012}       \\
			WIT3      & 2   & 2 & {[}-2,-2{]}       & {[}2,2{]}        &    \citep{W2012}          \\
			WIT4      & 2   & 2 & {[}-2,-2{]}       & {[}2,2{]}        &     \citep{W2012}         \\
			WIT5      & 2   & 2 & {[}-2,-2{]}       & {[}2,2{]}        &     \citep{W2012}         \\
			WIT6      & 2   & 2 & {[}-2,-2{]}       & {[}2,2{]}        &     \citep{W2012}         \\
			Deb       & 2   & 2 & {[}0.1,0.1{]}     & {[}1,1{]}        &    \citep{D1999}          \\
			PNR       & 2   & 2 & {[}-2,-2{]}       & {[}2,2{]}        &    \citep{PN2006}          \\
			DD1       & 5   & 2 & -20{[}1,...,1{]}  & 20{[}1,...,1{]}  &     \citep{DD1998}         \\
			FDS       & 10  & 3 & -{[}2,...,2{]}    & {[}2,...,2{]}    &    \citep{FD2009}          \\
			TRIDIA1   & 3   & 3 & -{[}1,...,1{]}    & {[}1,...,1{]}    &     \citep{T1983}         \\
			TRIDIA2   & 4   & 4 & -{[}1,...,1{]}    & {[}1,...,1{]}    &      \citep{T1983}     \\ \hline
		\end{tabular}	
	}
	\caption{Description of all test problems used in numerical experiments.}
	\label{tab1}
\end{table}
\begin{prob}
	Consider the following multiobjective optimization:
	$$(\mathrm{Imbalance})\ \min\limits_{x_{1},x_{2}\in[-2,2]}(ax_{1}^{2}+bx_{2}^{2},c(x_{1}-50)^{2}+d(x_{2}+50)^{2})$$
	where Imbalance1 with $a=0.1,b=10,c=1,d=100$, Imbalance2 with $a,b=1,c,d=100$.
\end{prob}

\begin{prob}
	Consider the following multiobjective optimization:
	$$(\mathrm{JOS1})\ \min\limits_{x\in \mathbb{R}^{n}}(\frac{1}{n}\sum\limits_{i=1}^{n}x_{i}^{2},\frac{1}{n}\sum\limits_{i=1}^{n}(x_{i}-2)^{2}).$$
\end{prob}
\begin{prob}
	Consider the following parametric multiobjective optimization:
	$$(\mathrm{WIT})\ \min\limits_{x\in \mathbb{R}^{2}}(f_{1}(x,\lambda),f_{2}(x,\lambda))$$
	where $$f_{1}(x,\lambda)=\lambda((x_{1}-2)^{2}+(x_{2}-2)^{2})+(1-\lambda)((x_{1}-2)^{4}+(x_{2}-2)^{8}),$$ $$f_{2}(x,\lambda)=(x_{1}+2\lambda)^{2}+(x_{2}+2\lambda)^{2}.$$ In which, $\lambda=0,0.5,0.9,0.99,0.999,1$ represent $\mathrm{WIT1}$-$\mathrm{6}$, respectively.
\end{prob}

\begin{figure}[H]
	\centering
	\centering
	\subfigure[SDMO]{
		\includegraphics[scale=0.3]{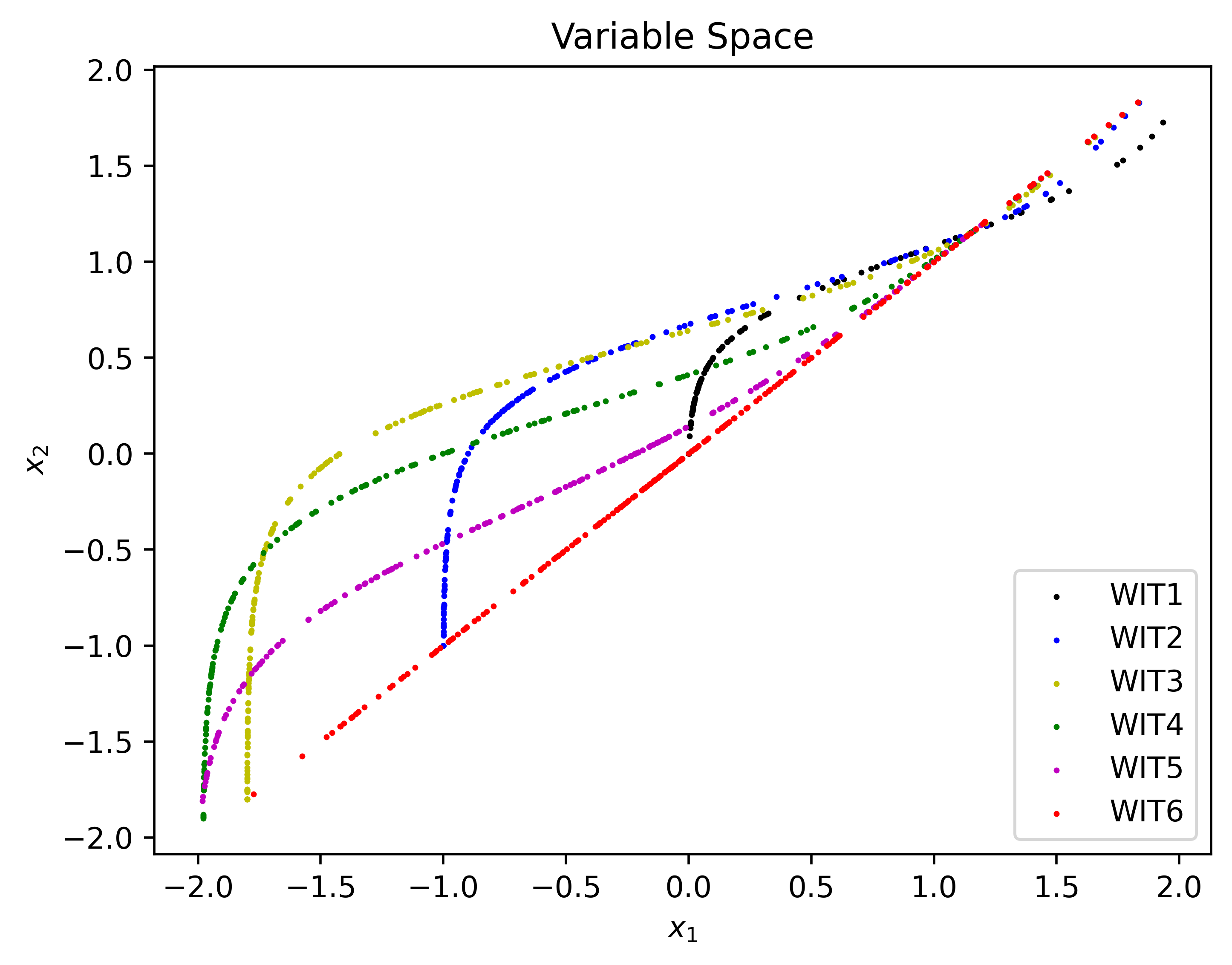} 
	}
	\subfigure[BBMO]{
		\includegraphics[scale=0.3]{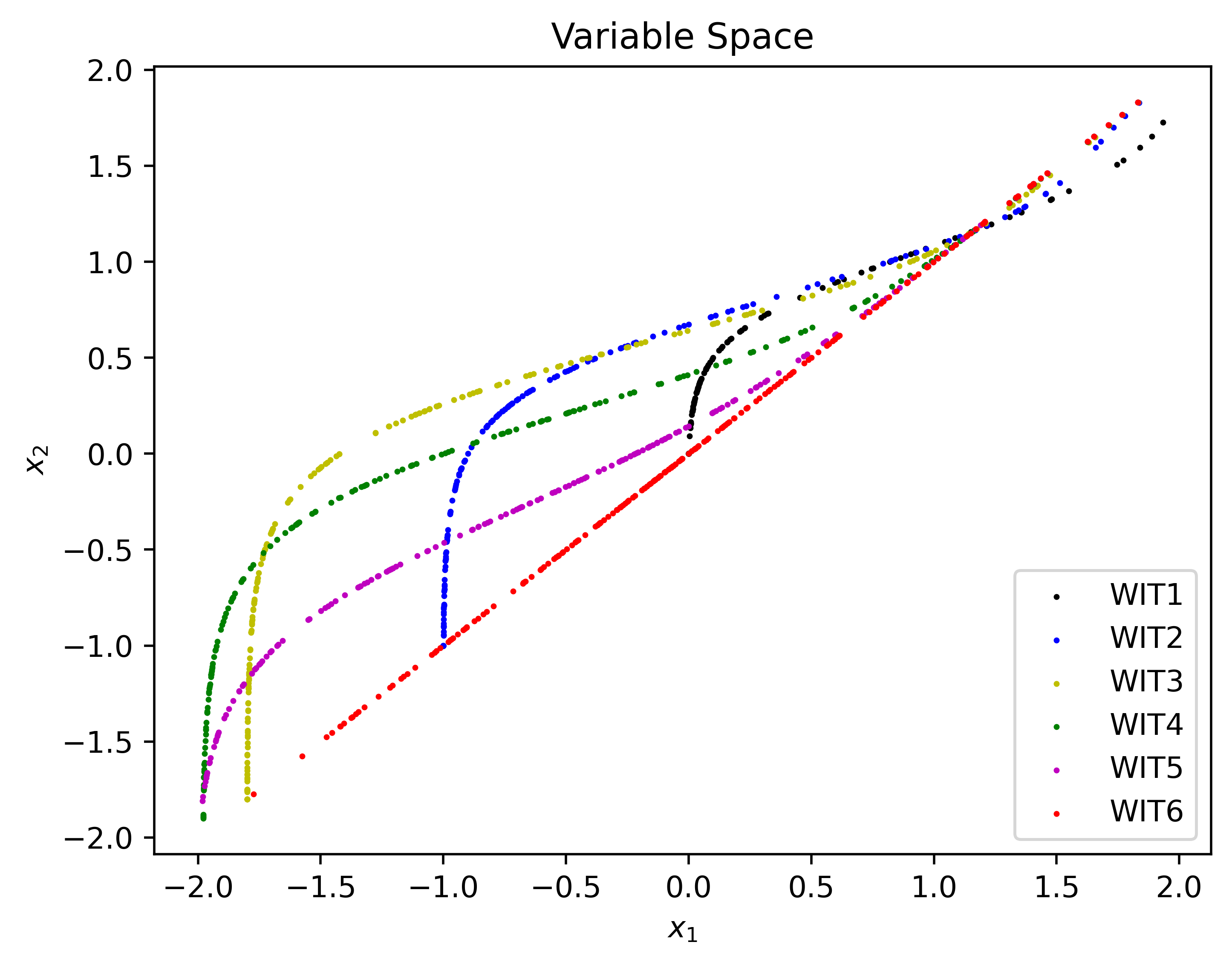} 
	}
	\subfigure[BBDMO]{
		\includegraphics[scale=0.3]{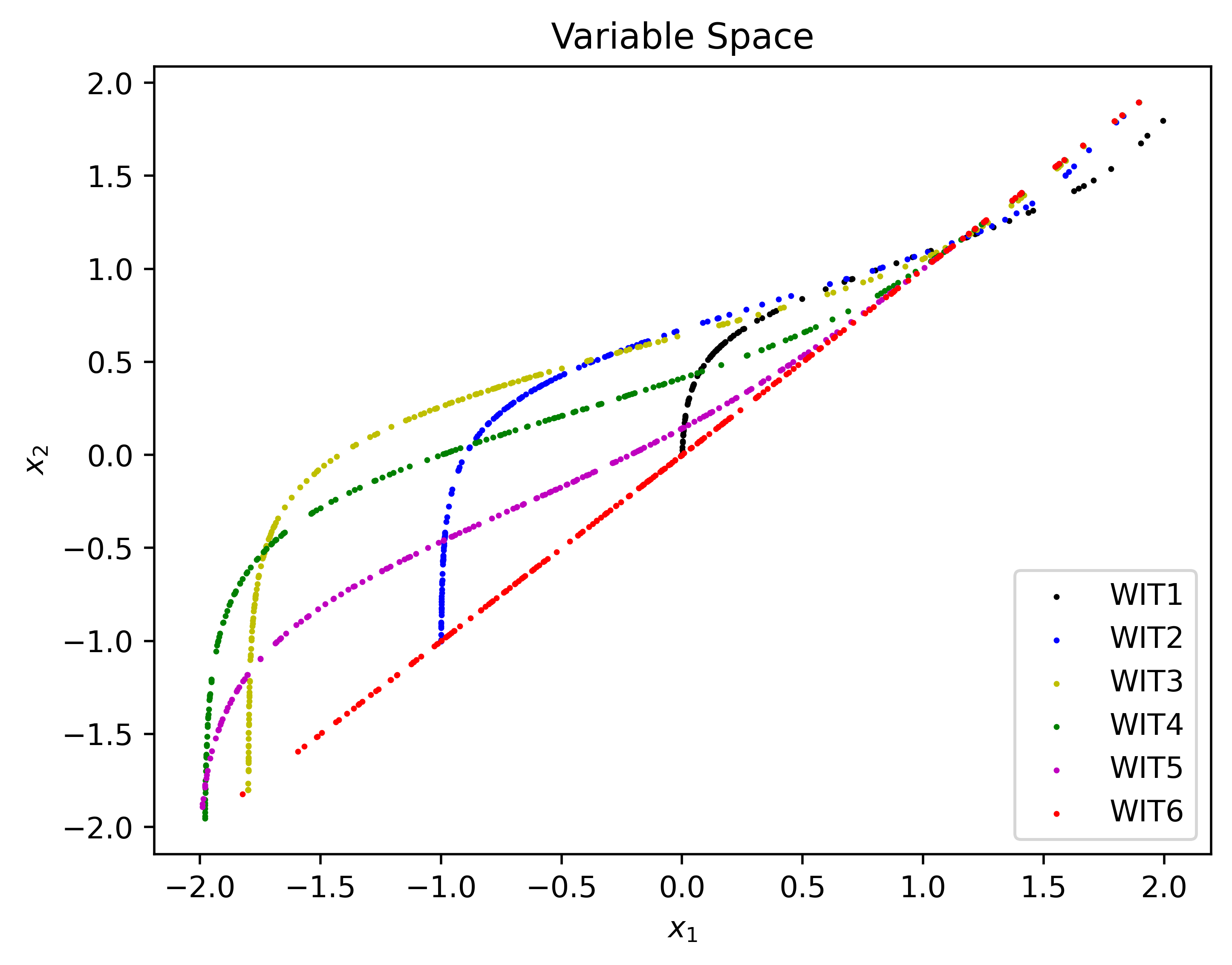} 
	}
	\caption{Numerical results in variable space obtained by SDMO, BBMO, BBDMO with monotone line search for problems WIT1-6.}\label{wit}	
\end{figure}

\begin{prob}
	Consider the following multiobjective optimization:
	$$(\mathrm{Deb})\ \min\limits_{x_{1}>0}(x_{1},\frac{g(x_{2})}{x_{1}})$$
	where $g(x_{2})=2-exp\{-(\frac{x_{2}-0.2}{0.004})^{2}\}-0.8exp\{-(\frac{x_{2}-0.6}{0.4})^{2}\}$.
\end{prob}
\begin{figure}[H]
	\centering
	\centering
	\subfigure[SDMO]{
		\includegraphics[scale=0.3]{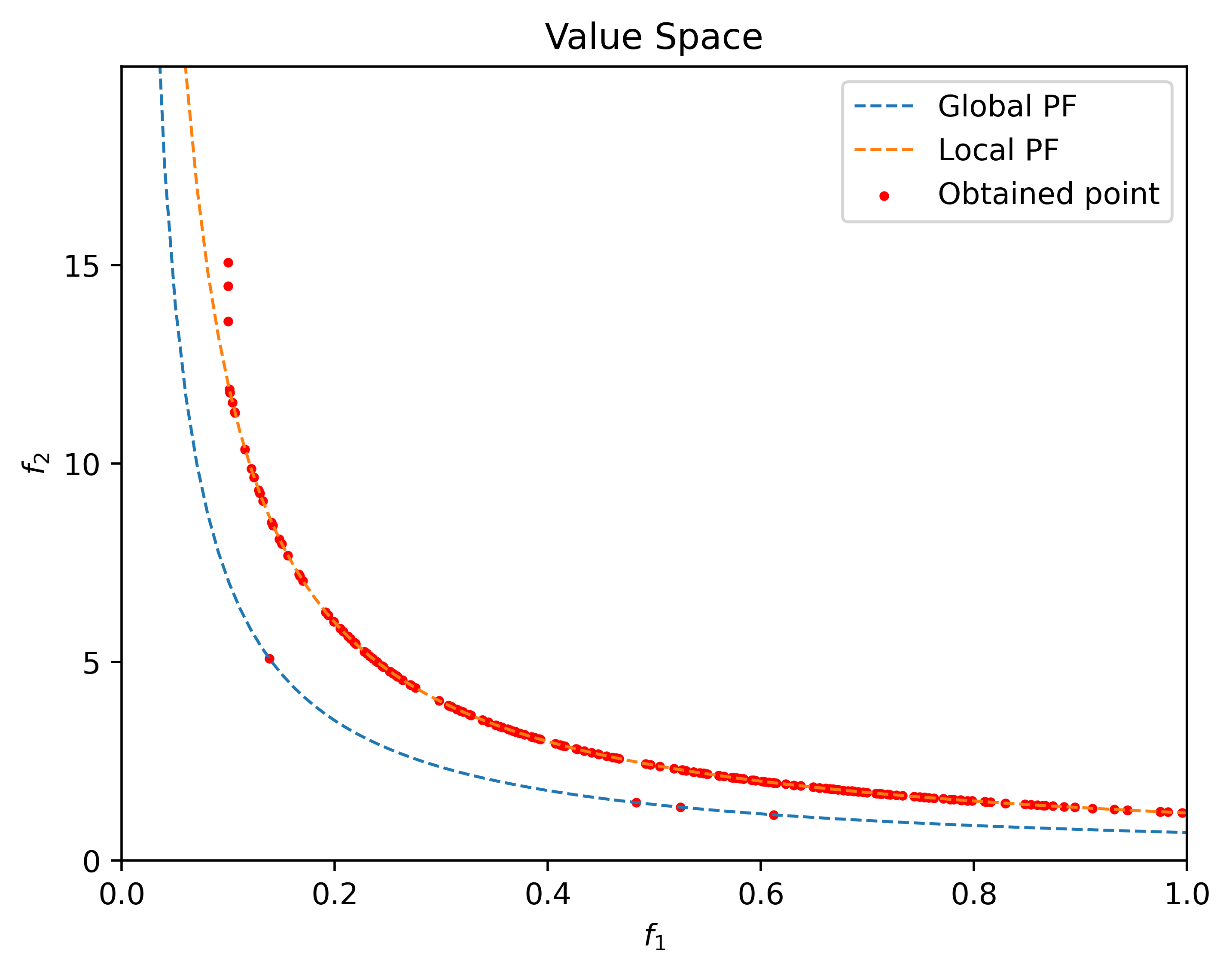} 
	}
	\subfigure[BBMO]{
		\includegraphics[scale=0.3]{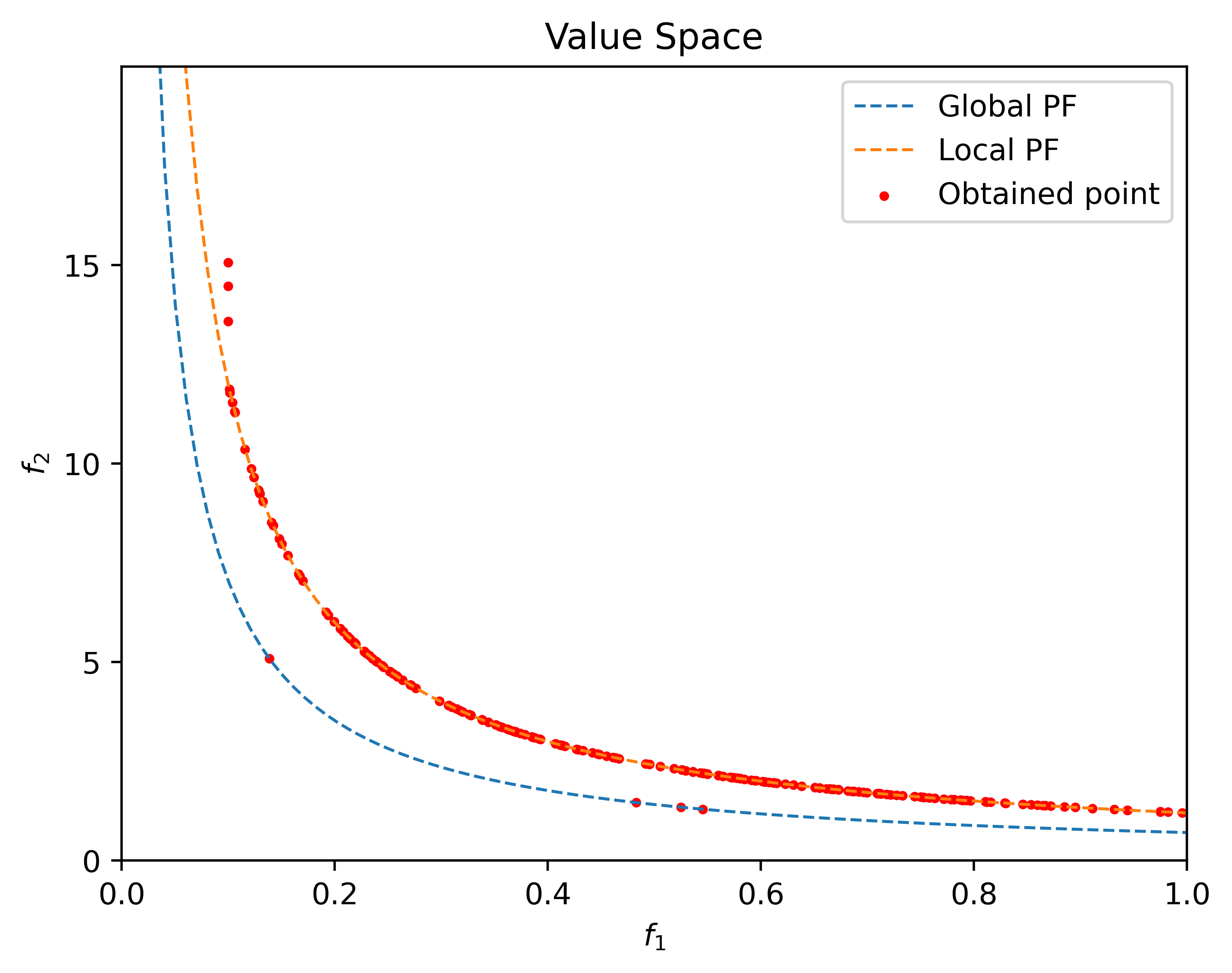} 
	}
	\subfigure[BBDMO]{
		\includegraphics[scale=0.3]{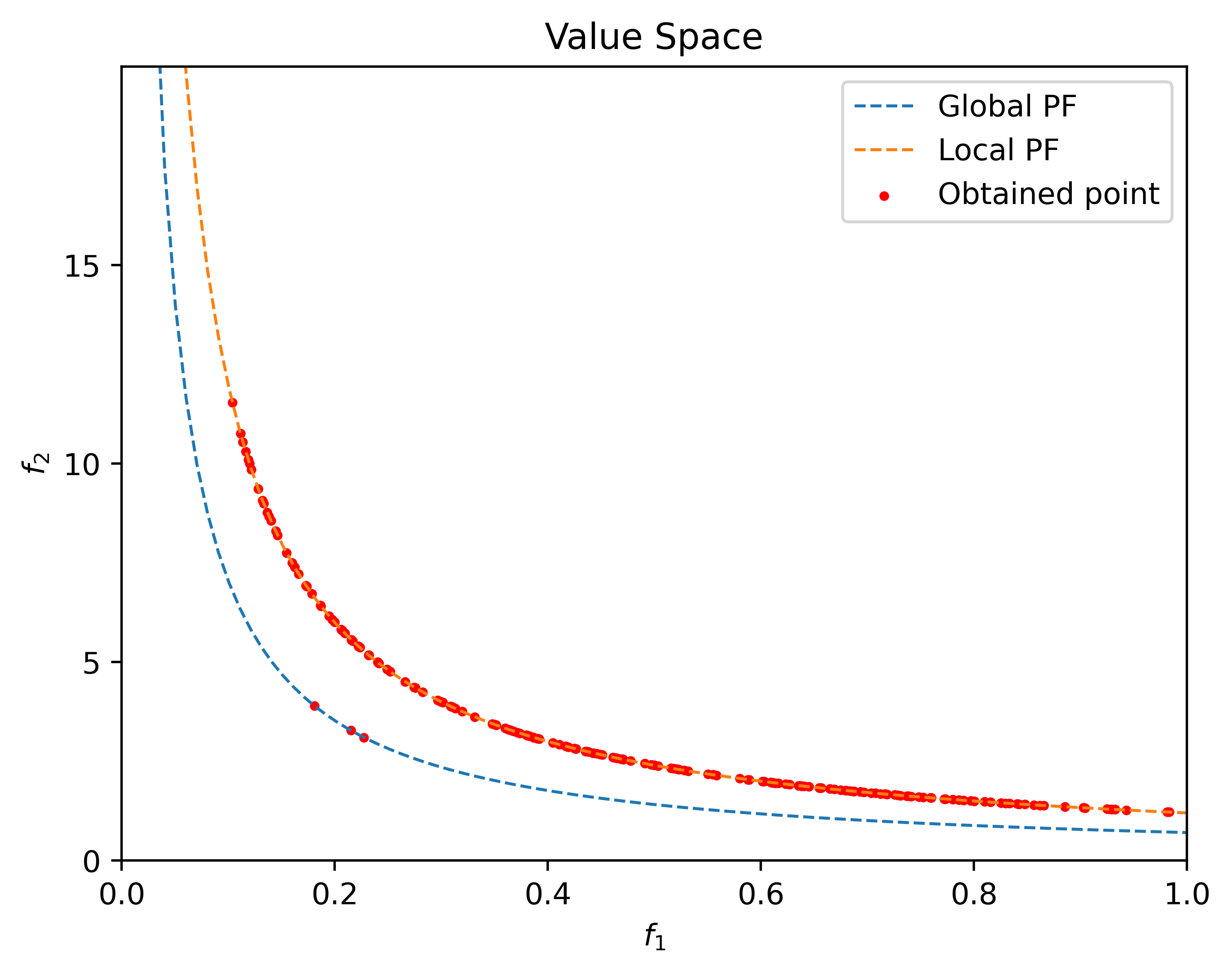} 
	}
	\caption{(a) Local and global minimizers of $g(x_{2})$. (b-d) Numerical results in value space obtained by SDMO, BBMO, and BBDMO with monotone line search for problem Deb.}	
\end{figure}
\begin{prob}
	Consider the following multiobjective optimization:
	$$(\mathrm{PNR})\ \min\limits_{x\in \mathbb{R}^{2}}(f_{1}(x),f_{2}(x))$$
	where $$f_{1}(x)=x_{1}^{4}+x_{2}^{4}-x_{1}^{2}+x_{2}^{2}-10x_{1}x_{2}+0.25x_{1}+20,$$ $$f_{2}(x)=(x_{1}-1)^{2}+x_{2}^{2}.$$
\end{prob}
\begin{figure}[H]
	\centering
	\centering
	\subfigure[SDMO]{
		\includegraphics[scale=0.3]{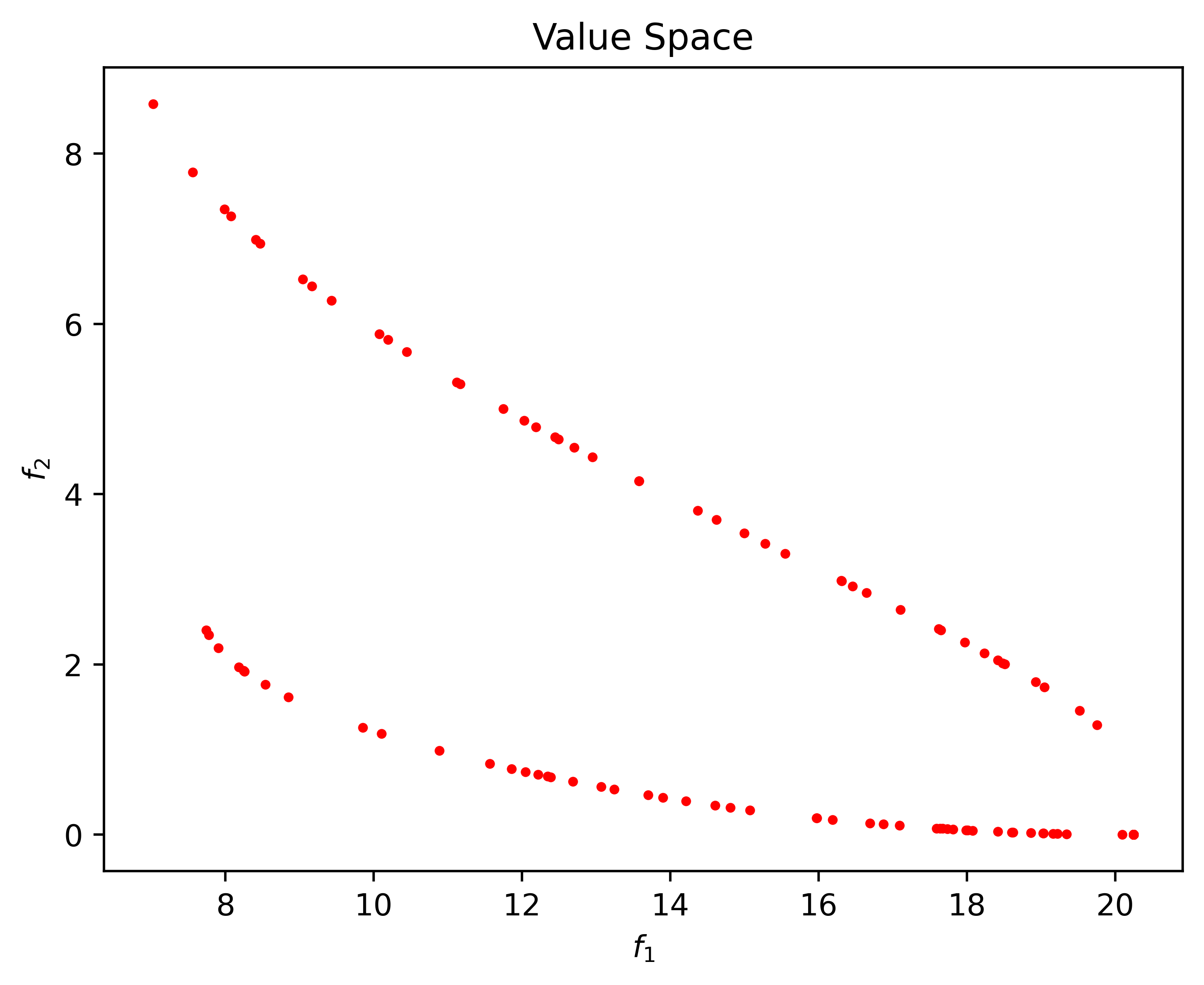} 
	}
	\subfigure[BBMO]{
		\includegraphics[scale=0.3]{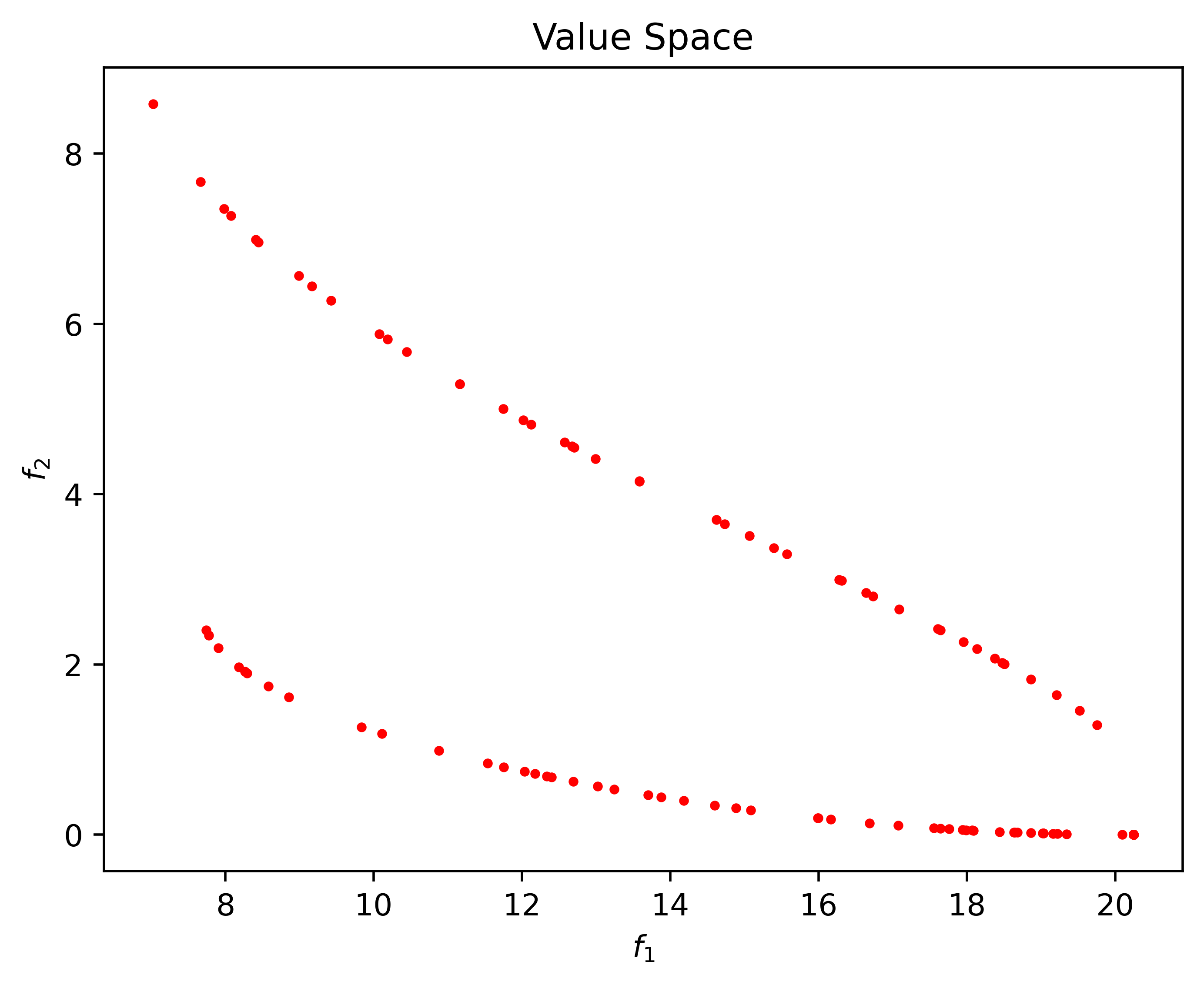} 
	}
	\subfigure[BBDMO]{
		\includegraphics[scale=0.3]{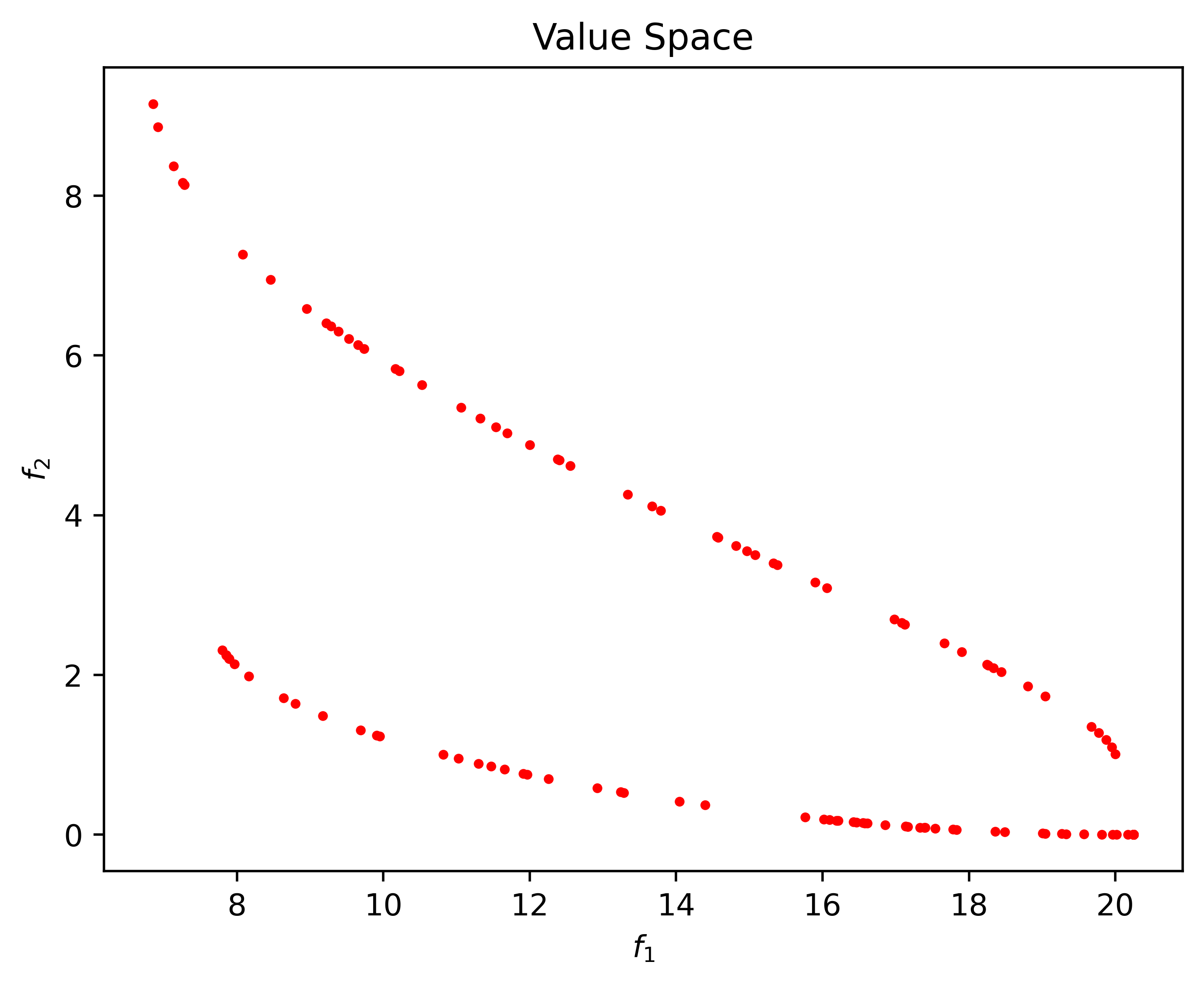} 
	}
	\caption{Numerical results in value space obtained by SDMO, BBMO, and BBDMO with monotone line search for problem PNR.}\label{pnr}	
\end{figure}
\begin{prob}
	Consider the following multiobjective optimization:
	$$(\mathrm{DD1})\ \min\limits_{x\in \mathbb{R}^{5}}(f_{1}(x),f_{2}(x))$$
	where $$f_{1}(x)=\sum\limits_{i=1}^{5}x^{2}_{i},$$ $$f_{2}(x)=3x_{1}+2x_{2}-\frac{x_{3}}{3}+0.01(x_{4}-x_{5})^{3}.$$
\end{prob}
\begin{figure}[H]
	\centering
	\centering
	\subfigure[SDMO]{
		\includegraphics[scale=0.3]{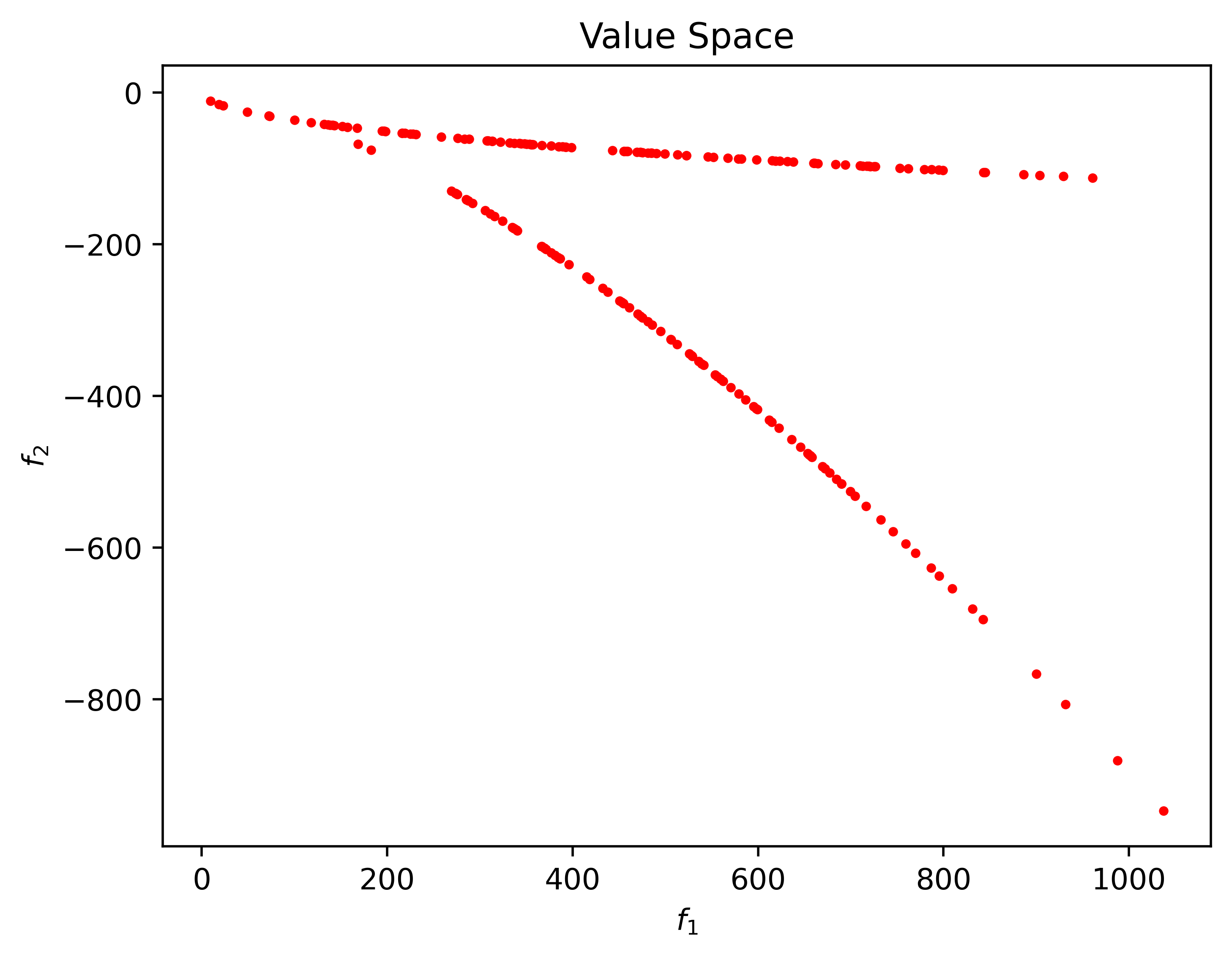} 
	}
	\subfigure[BBMO]{
		\includegraphics[scale=0.3]{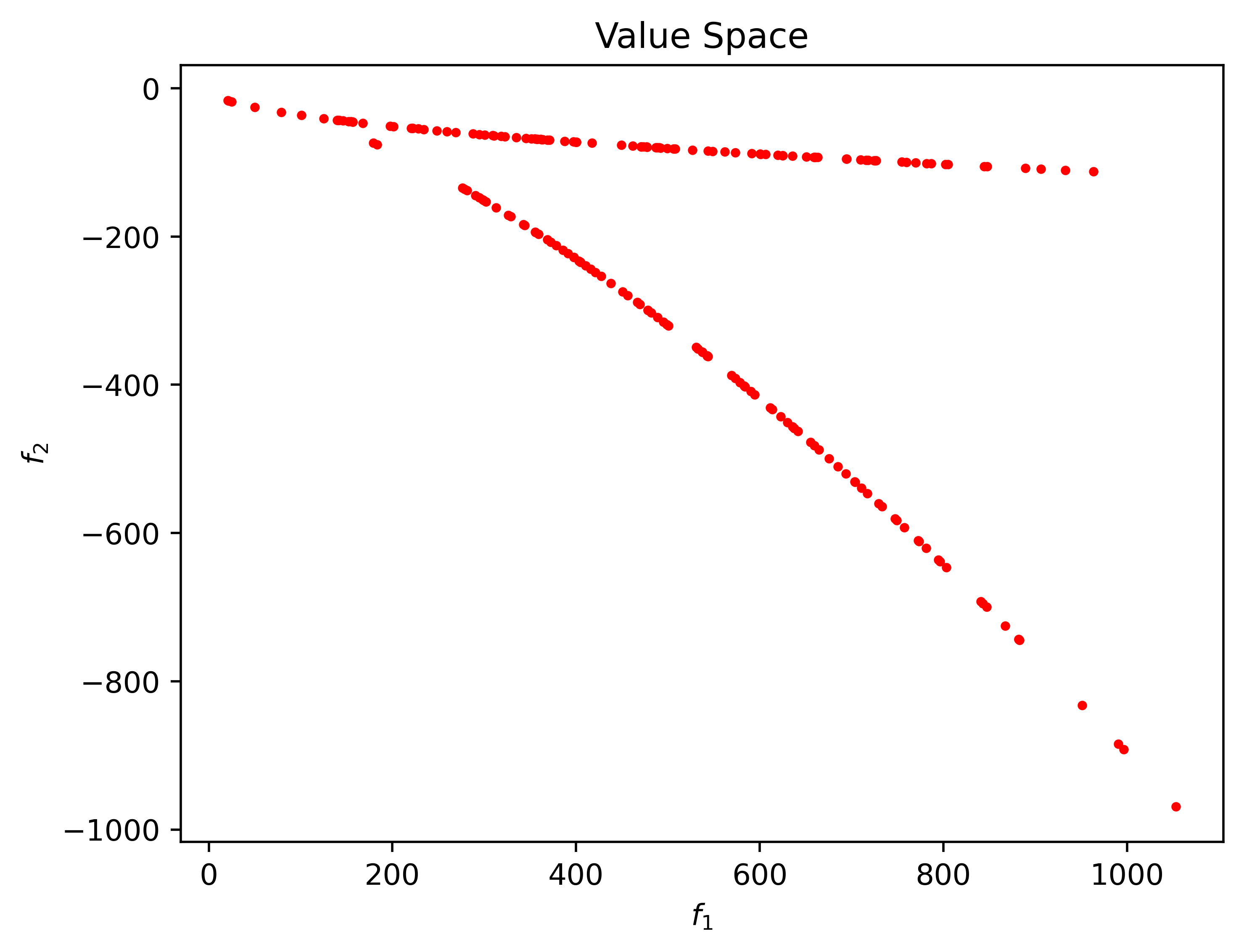} 
	}
	\subfigure[BBDMO]{
		\includegraphics[scale=0.3]{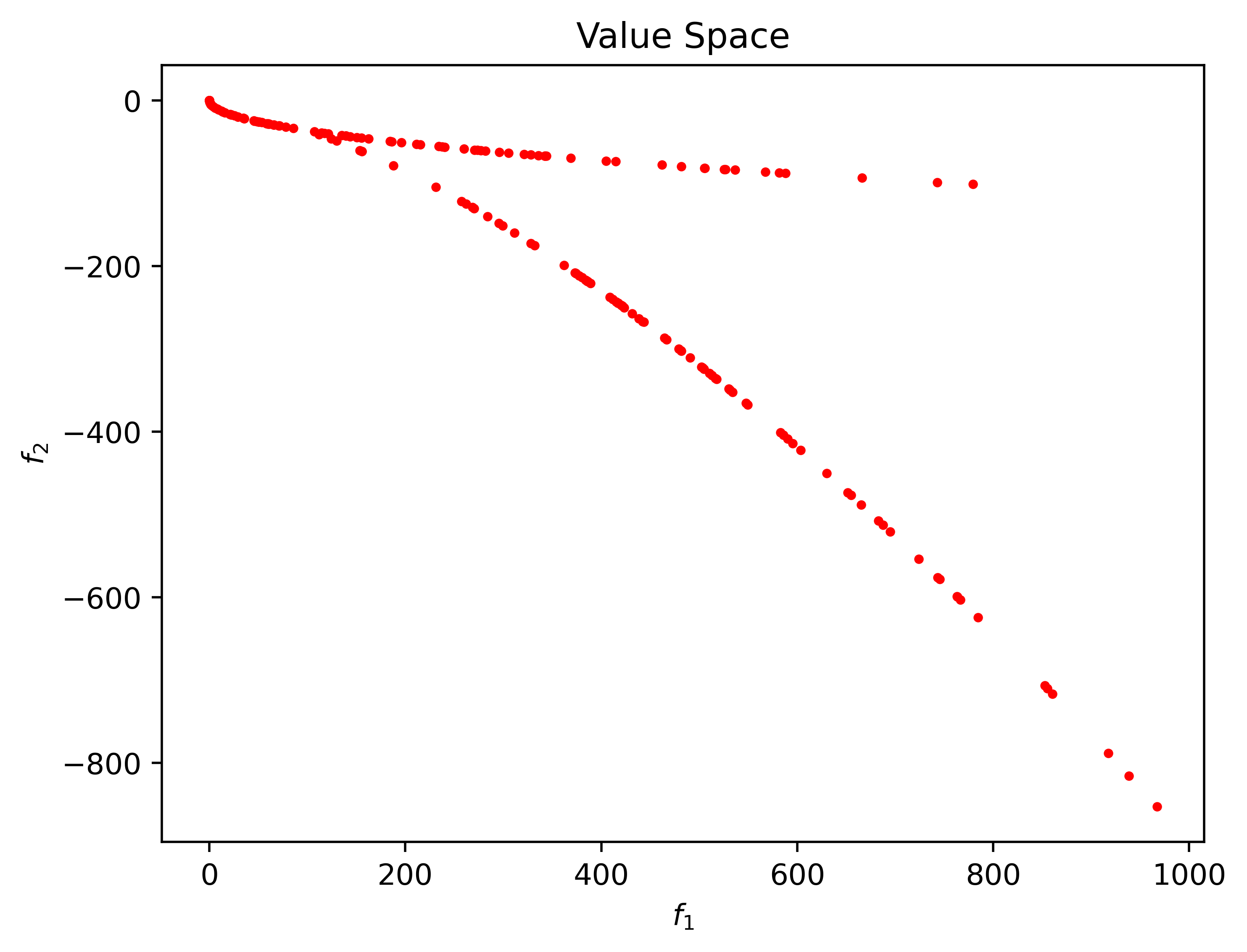} 
	}
	\caption{Numerical results in value space obtained by SDMO, BBMO, and BBDMO with monotone line search for problem DD1.}\label{DD1}	
\end{figure}
\begin{prob}
	Consider the following multiobjective optimization:
	$$(\mathrm{FDS})\ \min\limits_{x\in \mathbb{R}^{n}}(f_{1}(x),f_{2}(x),f_{3}(x))$$
	where $$f_{1}(x)=\frac{1}{n}\sum\limits_{i=1}^{n}i(x_{i}-i)^{2},$$ $$f_{2}(x)=\mathrm{exp}(\sum\limits_{i=1}^{n}\frac{x_{i}}{n})+\|x\|^{2}_{2},$$
	$$f_{3}(x)=\frac{1}{n(n+1)}\sum\limits_{i=1}^{n}i(n-i+1)e^{-x_{i}}.$$
\end{prob}
\begin{figure}[H]
	\centering
	\centering
	\subfigure[SDMO]{
		\includegraphics[scale=0.4]{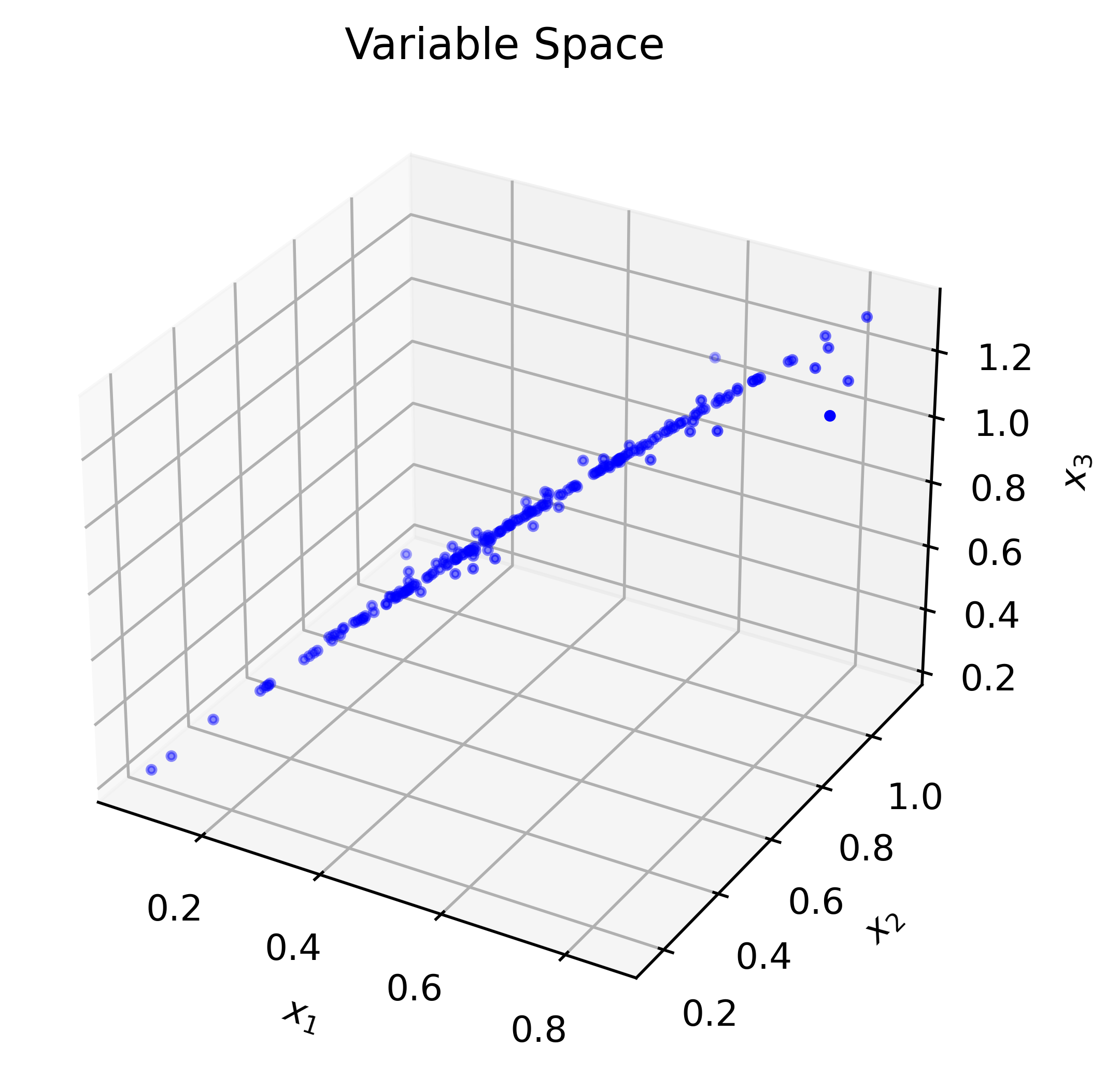} 
	}
	\subfigure[BBMO]{
		\includegraphics[scale=0.4]{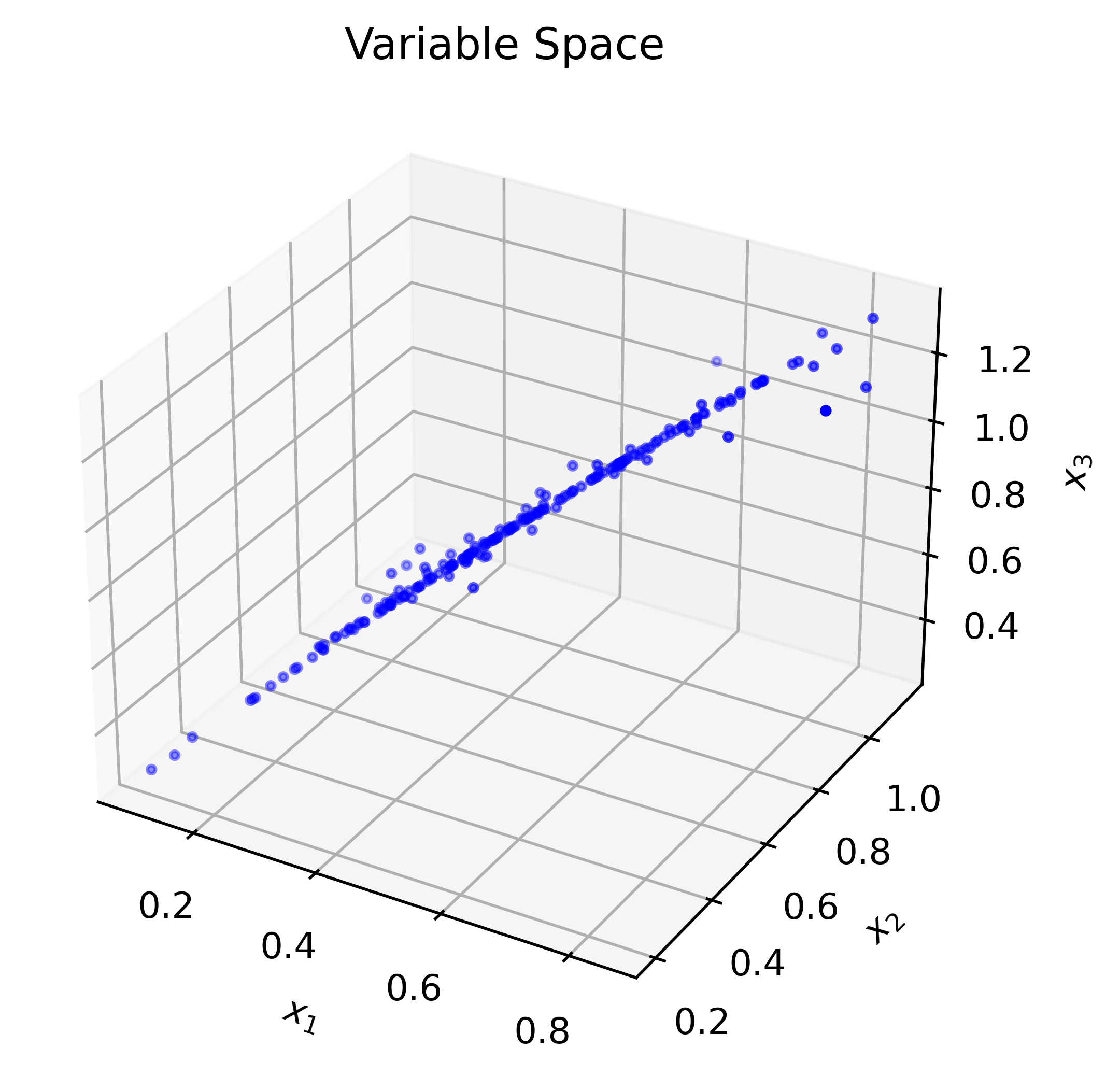} 
	}
	\subfigure[BBDMO]{
		\includegraphics[scale=0.4]{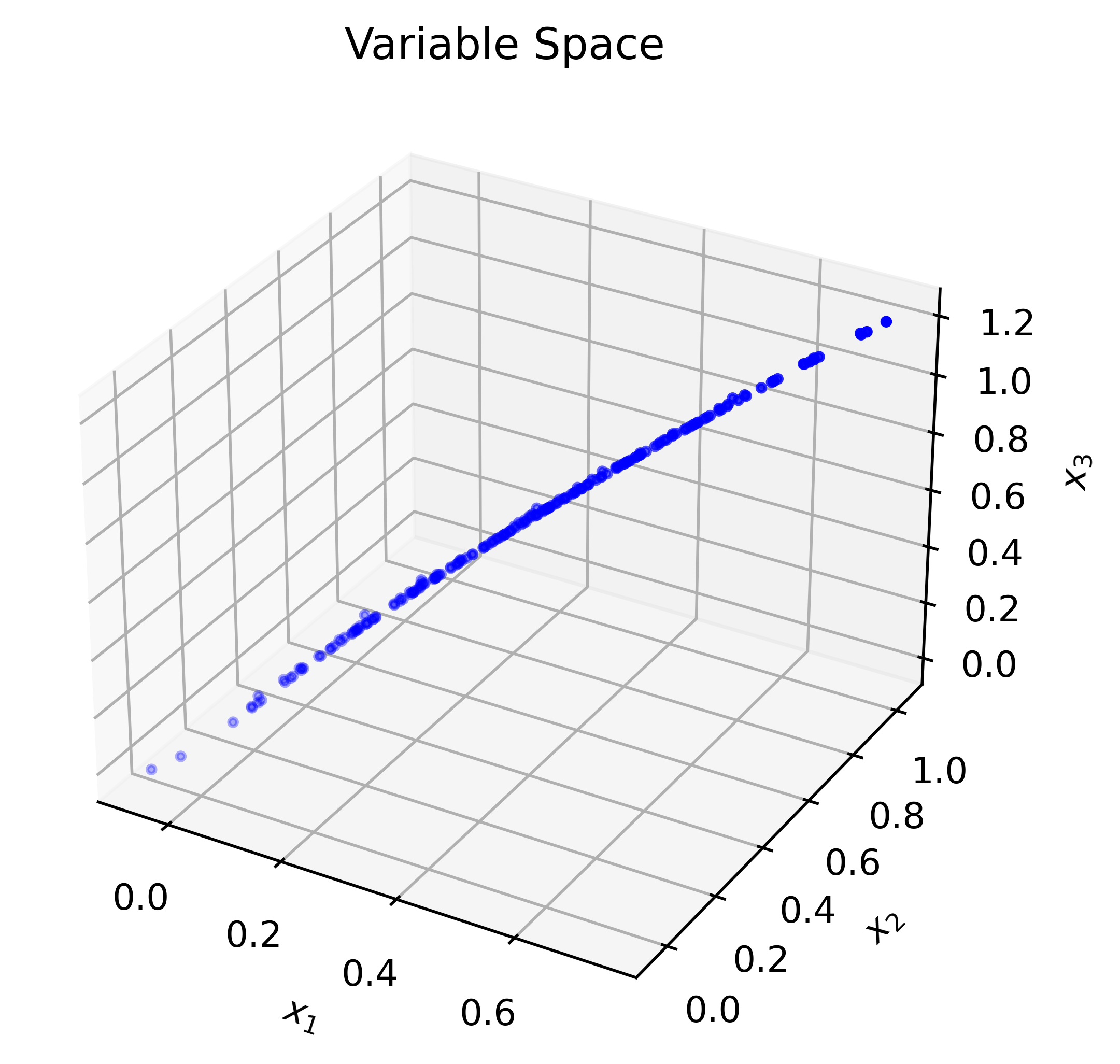} 
	}
	\caption{Numerical results in $(x_{1},x_{2},x_{3})$ variable space obtained by SDMO, BBMO, and BBDMO with monotone line search for problem FDS.}\label{FDS}	
\end{figure}

\begin{prob}
	Consider the following multiobjective optimization:
	$$(\mathrm{TRIDIA1})\ \min\limits_{x\in \mathbb{R}^{3}}(f_{1}(x),f_{2}(x),f_{3}(x))$$
	where $$f_{1}(x)=(2x_{1}-1)^{2},$$ $$f_{2}(x)=2(2x_{1}-x_{2})^{2},$$
	$$f_{3}(x)=3(x_{2}-x_{3})^{2}.$$
\end{prob}

\begin{prob}
	Consider the following multiobjective optimization:
	$$(\mathrm{TRIDIA2})\ \min\limits_{x\in \mathbb{R}^{4}}(f_{1}(x),f_{2}(x),f_{3}(x),f_{4}(x))$$
	where $$f_{1}(x)=(2x_{1}-1)^{2}+x^{2}_{2},$$ $$f_{i}(x)=i(2x_{i-1}-x_{i})^{2}-(i-1)x^{2}_{i-1}+ix_{i}^{2},\ i=2,3,$$
	$$f_{4}(x)=4(2x_{3}-x_{4})^{2}-3x^{2}_{3}.$$
\end{prob}

\begin{table}[t]
	\centering
	\resizebox{.9\columnwidth}{!}{
		\begin{tabular}{llllllllllllllll}
			\hline
			\multicolumn{1}{l}{Problem} &                      & \multicolumn{4}{l}{SDMO}                                 &                      & \multicolumn{4}{l}{BBMO}                                 &                      & \multicolumn{4}{l}{OBBMO}                                  \\ \cline{3-6} \cline{8-11} \cline{13-16} 
			\multicolumn{1}{l}{}        &                      & iter          & feval         & time($ms$)          & stepsize & \multicolumn{1}{c}{} & iter          & feval         & time($ms$)          & stepsize & \multicolumn{1}{c}{} & iter           & feval          & time($ms$)          & stepsize \\ \hline
			Imbalance1                  &                      & 68.93         & 139.26        & 9.54          & 0.49     &                      & 56.10         & 222.37        & 11.47         & 0.60     &                      & \textbf{2.88}  & \textbf{5.38}  & \textbf{0.61} & 0.75     \\
			Imbalance2                  &                      & 241.55        & 1930.08       & 68.05         & 0.01     &                      & 242.67        & 1697.30       & 68.00         & 0.01     &                      & \textbf{1.49}  & \textbf{5.36}  & \textbf{0.43} & 0.51     \\
			JOS1a                       &                      & 198.35        & 198.35        & 25.63         & 1.00     &                      & \textbf{2.00} & \textbf{2.00} & \textbf{0.41} & 13.00    &                      & \textbf{2.00}  & \textbf{2.00}  & 0.51          & 1.00     \\
			JOS1b                       &                      & 383.44        & 383.44        & 59.21         & 1.00     &                      & \textbf{2.00} & \textbf{2.00} & \textbf{0.43} & 25.50    &                      & \textbf{2.00}  & \textbf{2.00}  & 0.50          & 1.00     \\
			JOS1c                       &                      & 500.00        & 500.00        & 75.18         & 1.00     &                      & \textbf{2.00} & \textbf{2.00} & \textbf{0.42} & 25.50    &                      & \textbf{2.00}  & \textbf{2.00}  & 0.51          & 1.00     \\
			JOS1d                       &                      & 500.00        & 500.00        & 70.33         & 1.00     &                      & \textbf{2.00} & \textbf{2.00} & \textbf{0.37} & 25.50    &                      & \textbf{2.00}  & \textbf{2.00}  & 0.54          & 1.00     \\
			WIT1                        &                      & 73.59         & 494.22        & 20.18         & 0.03     &                      & 60.40         & 317.06        & 14.85         & 0.04     &                      & \textbf{2.57}  & \textbf{4.83}  & \textbf{0.47} & 0.70     \\
			WIT2                        &                      & 108.75        & 735.53        & 29.78         & 0.02     &                      & 105.48        & 565.01        & 25.87         & 0.02     &                      & \textbf{3.35}  & \textbf{6.75}  & \textbf{0.82} & 0.69     \\
			WIT3                        &                      & 49.14         & 249.29        & 10.77         & 0.06     &                      & 47.85         & 181.03        & 9.81          & 0.06     &                      & \textbf{3.68}  & \textbf{6.16}  & \textbf{0.93} & 0.73     \\
			WIT4                        &                      & 10.65         & 35.38         & 2.02          & 0.21     &                      & 9.14          & 17.61         & 1.49          & 0.22     &                      & \textbf{3.32}  & \textbf{4.67}  & \textbf{0.75} & 0.78     \\
			WIT5                        &                      & 7.08          & 18.58         & 1.21          & 0.33     &                      & 6.59          & 11.80         & 1.06          & 0.28     &                      & \textbf{3.22}  & \textbf{4.33}  & \textbf{0.67} & 0.81     \\
			WIT6                        &                      & \textbf{1.00} & \textbf{2.00} & 0.32          & 0.50     &                      & \textbf{1.00} & \textbf{2.00} & 0.34          & 0.50     &                      & \textbf{1.00}  & \textbf{2.00}  & \textbf{0.31} & 0.50     \\
			Deb                         &                      & 95.62         & 625.88        & 29.29         & 0.03     &                      & 93.67         & 546.01        & 27.83         & 0.03     &                      & \textbf{3.91}  & \textbf{7.09}  & \textbf{0.87} & 0.73     \\
			PNR                         &                      & 9.84          & 44.00         & 2.07          & 0.11     &                      & 10.51         & 27.28         & 1.88          & 0.11     &                      & \textbf{2.70}  & \textbf{4.60}  & \textbf{0.63} & 0.73     \\
			DD                          & \multicolumn{1}{c}{} & 77.93         & 154.97        & 11.87         & 0.51     &                      & 65.50         & 270.31        & 15.20         & 0.60     &                      & \textbf{7.44}  & \textbf{7.99}  & \textbf{1.52} & 0.96     \\
			FDS                         & \multicolumn{1}{c}{} & 246.54        & 1145.91       & 402.24        & 0.17     &                      & 234.77        & 1692.15       & 541.25        & 0.18     &                      & \textbf{6.74}  & \textbf{7.30}  & \textbf{0.59} & 0.97     \\
			TRIDIA\_1                   & \multicolumn{1}{c}{} & 5.47          & 12.04         & \textbf{2.14} & 0.59     &                      & 5.65          & 10.05         & 2.34          & 0.49     &                      & \textbf{4.28}  & \textbf{7.53}  & 2.68          & 0.65     \\
			TRIDIA\_2                   & \multicolumn{1}{c}{} & 27.73         & 181.88        & 37.10         & 0.11     &                      & 21.30         & 131.48        & 21.20         & 0.11     &                      & \textbf{10.15} & \textbf{18.95} & \textbf{7.62} & 0.62     \\ \hline
		\end{tabular}
	}
	\caption{Number of average iterations (iter), number of average function evaluations (feval), average CPU time (time($ms$)) and average stepsize (stepsize) of SDMO, BBMO, and BBDMO implemented on different test problems with \textbf{monotone} line search.}
	\label{tab2}
\end{table}

\begin{table}[t]
	\centering
	\resizebox{.9\columnwidth}{!}{
		\begin{tabular}{llllllllllllllll}
			\hline
			\multicolumn{1}{l}{Problem} &                      & \multicolumn{4}{l}{SDMO}                                                                            &                      & \multicolumn{4}{l}{BBMO}                                  &                      & \multicolumn{4}{l}{OBBMO}                                 \\ \cline{3-6} \cline{8-11} \cline{13-16} 
			\multicolumn{1}{l}{}        &                      & \multicolumn{1}{l}{iter} & \multicolumn{1}{l}{feval} & time($ms$)          & \multicolumn{1}{l}{stepsize} & \multicolumn{1}{c}{} & iter          & feval          & time($ms$)          & stepsize & \multicolumn{1}{c}{} & iter          & feval          & time($ms$)          & stepsize \\ \hline
			Imbalance1                  &                      & 34.76                    & 39.96                     & 4.34          & 0.94                         &                      & 15.79         & 47.39          & 3.07          & 1.23     &                      & \textbf{2.88} & \textbf{5.38}  & \textbf{0.72} & 0.75     \\
			Imbalance2                  &                      & 232.21                   & 1822.26                   & 61.49         & 0.01                         &                      & 232.03        & 1587.21        & 64.98         & 0.01     &                      & \textbf{1.49} & \textbf{5.36}  & \textbf{0.36} & 0.51     \\
			JOS1a                       &                      & 198.35                   & 198.35                    & 28.75         & 1.00                         &                      & \textbf{2.00} & \textbf{2.00}  & \textbf{0.53} & 13.00    &                      & \textbf{2.00} & \textbf{2.00}  & 0.54          & 1.00     \\
			JOS1b                       &                      & 383.44                   & 383.44                    & 63.76         & 1.00                         &                      & \textbf{2.00} & \textbf{2.00}  & \textbf{0.50} & 25.50    &                      & \textbf{2.00} & \textbf{2.00}  & 0.54          & 1.00     \\
			JOS1c                       &                      & 500.00                   & 500.00                    & 79.81         & 1.00                         &                      & \textbf{2.00} & \textbf{2.00}  & 0.53          & 25.50    &                      & \textbf{2.00} & \textbf{2.00}  & \textbf{0.48} & 1.00     \\
			JOS1d                       &                      & 500.00                   & 500.00                    & 73.98         & 1.00                         &                      & \textbf{2.00} & \textbf{2.00}  & \textbf{0.49} & 25.50    &                      & \textbf{2.00} & \textbf{2.00}  & 0.51          & 1.00     \\
			WIT1                        &                      & 34.12                    & 183.82                    & 7.36          & 0.16                         &                      & 20.41         & 92.37          & 4.96          & 0.10     &                      & \textbf{2.55} & \textbf{4.70}  & \textbf{0.58} & 0.72     \\
			WIT2                        &                      & 69.05                    & 402.54                    & 16.34         & 0.07                         &                      & 58.32         & 277.15         & 14.69         & 0.04     &                      & \textbf{3.33} & \textbf{6.58}  & \textbf{0.87} & 0.71     \\
			WIT3                        &                      & 29.13                    & 89.89                     & 5.04          & 0.38                         &                      & 10.63         & 28.27          & 1.99          & 0.16     &                      & \textbf{3.61} & \textbf{5.85}  & \textbf{0.94} & 0.76     \\
			WIT4                        &                      & 27.94                    & 43.19                     & 3.98          & 0.76                         &                      & 3.46          & 4.63           & \textbf{0.74} & 0.36     &                      & \textbf{3.33} & \textbf{4.58}  & 0.94          & 0.80     \\
			WIT5                        &                      & 30.82                    & 44.68                     & 4.29          & 0.79                         &                      & 3.32          & 4.35           & \textbf{0.62} & 0.42     &                      & \textbf{3.22} & \textbf{4.33}  & 0.92          & 0.81     \\
			WIT6                        &                      & \textbf{1.00}            & \textbf{2.00}             & \textbf{0.36} & 0.50                         &                      & \textbf{1.00} & \textbf{2.00}  & 0.39          & 0.50     &                      & \textbf{1.00} & \textbf{2.00}  & 0.38          & 0.50     \\
			Deb                         &                      & 65.14                    & 385.94                    & 17.99         & 0.10                         &                      & 56.54         & 330.91         & 19.37         & 0.05     &                      & \textbf{4.00} & \textbf{7.04}  & \textbf{0.94} & 0.74     \\
			PNR                         &                      & 10.20                    & 28.13                     & 1.84          & 0.42                         &                      & 2.68          & 4.68           & \textbf{0.58} & 0.26     &                      & \textbf{2.67} & \textbf{4.45}  & 0.77          & 0.74     \\
			DD                          & \multicolumn{1}{c}{} & 39.06                    & 42.99                     & 5.19          & 0.95                         &                      & 27.75         & 95.97          & 6.26          & 1.19     &                      & \textbf{7.44} & \textbf{7.99}  & \textbf{1.71} & 0.96     \\
			FDS                         & \multicolumn{1}{c}{} & 165.06                   & 699.86                    & 253.06        & 0.27                         &                      & 30.16         & 190.86         & 67.14         & 1.11     &                      & \textbf{7.34} & \textbf{7.39}  & \textbf{5.75} & 0.99     \\
			TRIDIA\_1                   & \multicolumn{1}{c}{} & 22.12                    & 44.24                     & 8.13          & 0.62                         &                      & 14.60         & 17.87          & 7.24          & 0.83     &                      & \textbf{8.37} & \textbf{10.57} & \textbf{6.91} & 0.74     \\
			TRIDIA\_2                   & \multicolumn{1}{c}{} & 32.03                    & 76.21                     & 29.84         & 0.61                         &                      & \textbf{9.27} & \textbf{11.18} & \textbf{6.34} & 0.94     &                      & 11.39         & 15.76          & 24.48         & 0.79     \\ \hline
		\end{tabular}
	}
	\caption{Number of average iterations (iter), number of average function evaluations (feval), average CPU time (time($ms$)) and average stepsize (stepsize) of SDMO, BBMO, and BBDMO implemented on different test problems with \textbf{max-type nonmonotone} line search.}
	\label{tab3}
\end{table}

\begin{table}[t]
	\centering
	\resizebox{.9\columnwidth}{!}{
		\begin{tabular}{llllllllllllllll}
			\hline
			\multicolumn{1}{l}{Problem} &                      & \multicolumn{4}{l}{SDMO}                                                                            &                      & \multicolumn{4}{l}{BBMO}                                  &                      & \multicolumn{4}{l}{OBBMO}                                 \\ \cline{3-6} \cline{8-11} \cline{13-16} 
			\multicolumn{1}{l}{}        &                      & \multicolumn{1}{l}{iter} & \multicolumn{1}{l}{feval} & time($ms$)          & \multicolumn{1}{l}{stepsize} & \multicolumn{1}{c}{} & iter          & feval          & time($ms$)          & stepsize & \multicolumn{1}{c}{} & iter          & feval          & time($ms$)          & stepsize \\ \hline
			Imbalance1                  &                      & 33.27                    & 35.68                     & 4.48          & 0.98                         &                      & 19.92         & 66.13          & 4.06          & 1.00     &                      & \textbf{2.88} & \textbf{5.38}  & \textbf{0.56} & 0.75     \\
			Imbalance2                  &                      & 245.40                   & 1958.63                   & 72.68         & 0.01                         &                      & 245.95        & 1717.65        & 65.19         & 0.01     &                      & \textbf{1.49} & \textbf{5.36}  & \textbf{0.42} & 0.51     \\
			JOS1a                       &                      & 198.35                   & 198.35                    & 28.04         & 1.00                         &                      & \textbf{2.00} & \textbf{2.00}  & \textbf{0.48} & 13.00    &                      & \textbf{2.00} & \textbf{2.00}  & 0.51          & 1.00     \\
			JOS1b                       &                      & 383.44                   & 383.44                    & 68.55         & 1.00                         &                      & \textbf{2.00} & \textbf{2.00}  & 0.51          & 25.50    &                      & \textbf{2.00} & \textbf{2.00}  & \textbf{0.50} & 1.00     \\
			JOS1c                       &                      & 500.00                   & 500.00                    & 77.42         & 1.00                         &                      & \textbf{2.00} & \textbf{2.00}  & \textbf{0.52} & 25.50    &                      & \textbf{2.00} & \textbf{2.00}  & 0.54          & 1.00     \\
			JOS1d                       &                      & 500.00                   & 500.00                    & 83.26         & 1.00                         &                      & \textbf{2.00} & \textbf{2.00}  & \textbf{0.51} & 25.50    &                      & \textbf{2.00} & \textbf{2.00}  & 0.52          & 1.00     \\
			WIT1                        &                      & 35.27                    & 189.07                    & 8.26          & 0.15                         &                      & 30.73         & 145.85         & 7.74          & 0.07     &                      & \textbf{2.55} & \textbf{4.72}  & \textbf{0.63} & 0.72     \\
			WIT2                        &                      & 75.98                    & 459.34                    & 18.58         & 0.05                         &                      & 71.37         & 345.89         & 16.95         & 0.03     &                      & \textbf{3.32} & \textbf{6.60}  & \textbf{0.69} & 0.70     \\
			WIT3                        &                      & 31.43                    & 100.50                    & 5.61          & 0.35                         &                      & 19.52         & 58.57          & 4.08          & 0.12     &                      & \textbf{3.67} & \textbf{6.08}  & \textbf{0.86} & 0.74     \\
			WIT4                        &                      & 35.33                    & 47.60                     & 4.74          & 0.84                         &                      & 3.54          & 4.79           & \textbf{0.58} & 0.36     &                      & \textbf{3.33} & \textbf{4.58}  & 0.66          & 0.80     \\
			WIT5                        &                      & 37.68                    & 49.49                     & 5.49          & 0.85                         &                      & 3.33          & 4.37           & \textbf{0.63} & 0.40     &                      & \textbf{3.22} & \textbf{4.33}  & 0.70          & 0.81     \\
			WIT6                        &                      & \textbf{1.00}            & \textbf{2.00}             & \textbf{0.33} & 0.50                         &                      & \textbf{1.00} & \textbf{2.00}  & 0.37          & 0.50     &                      & \textbf{1.00} & \textbf{2.00}  & 0.36          & 0.50     \\
			Deb                         &                      & 71.03                    & 431.83                    & 20.61         & 0.09                         &                      & 64.21         & 374.68         & 20.14         & 0.04     &                      & \textbf{3.95} & \textbf{7.03}  & \textbf{0.97} & 0.74     \\
			PNR                         &                      & 10.72                    & 28.59                     & 1.77          & 0.45                         &                      & 2.86          & 5.23           & \textbf{0.60} & 0.24     &                      & \textbf{2.69} & \textbf{4.47}  & 0.62          & 0.74     \\
			DD                          & \multicolumn{1}{c}{} & 37.68                    & 38.91                     & 5.19          & 0.98                         &                      & 34.26         & 123.07         & 7.57          & 1.03     &                      & \textbf{7.44} & \textbf{7.99}  & \textbf{1.68} & 0.96     \\
			FDS                         & \multicolumn{1}{c}{} & 51.06                    & 69.56                     & 38.39         & 0.95                         &                      & 39.48         & 182.72         & 52.59         & 1.09     &                      & \textbf{7.34} & \textbf{7.39}  & \textbf{5.54} & 0.99     \\
			TRIDIA\_1                   & \multicolumn{1}{c}{} & 25.83                    & 58.19                     & 10.65         & 0.59                         &                      & 15.08         & 19.28          & \textbf{6.16} & 0.85     &                      & \textbf{8.90} & \textbf{11.78} & 6.97          & 0.76     \\
			TRIDIA\_2                   & \multicolumn{1}{c}{} & 36.34                    & 63.62                     & 33.01         & 0.70                         &                      & \textbf{9.44} & \textbf{11.19} & \textbf{7.39} & 0.84     &                      & 12.14         & 18.46          & 31.01         & 0.78     \\ \hline
		\end{tabular}
	}
	\caption{Number of average iterations (iter), number of average function evaluations (feval), average CPU time (time($ms$)) and average stepsize (stepsize) of SDMO, BBMO, and BBDMO implemented on different test problems with \textbf{average-type nonmonotone} line search.}
	\label{tab4}
\end{table}

For each test problem, the number of average iterations (iter), number of average function evaluations (feval), average CPU time (time($ms$)), and average stepsize (stepsize) of the different algorithms with monotone and nonmonotone line search are listed in Tables \ref{tab2}-\ref{tab4}, respectively. The numerical results confirmed that BBDMO outperforms SDMO and BBMO in terms of average iterations, average function evaluations, and average CPU time. The average stepsize of BBDMO is robust and in $[0.5,1]$ for different problems and line search techniques. While the average stepsize of SDMO and BBMO varies significantly in different problems and is enlarged by nonmonotone line search techniques except for problems JOS1a-d. Comparing Table \ref{tab2} with Tables \ref{tab3} and \ref{tab4}, we can see the improvement of average iterations and average function evaluations required by SDMO and BBMO is limited with nonmonotone line search techniques, which shows that nonmonotone line search techniques can not address the drawback of SDMO effectively. BBMO provides an appropriate initial stepsize along with the steepest descent direction, which improves the performance for problems JOS1a-d and TRIDIA2 due to quadratic objective functions. However, there is little improvement for other test problems. Furthermore, SDMO and BBMO perform poorly on problems Imbalance2, WIT1-2, Deb and FDS, which have imbalanced objective functions, such as higher-order and exponential functions. Hence, the weakness of SDMO is due to the descent direction (the descent direction of BBMO is the same as SDMO), which does not consider the imbalances among objective functions. In view of the performance of BBDMO on problems Imbalance2, WIT1-2, Deb and FDS, we conclude that BBDMO can handle the imbalances effectively.
\section{Conclusions}
In this paper, we have pointed out that the small stepsize is mainly due to imbalances among objective functions in SDMO. Then, we provided a new method to address the issue, called BBDMO. Over the past decades, multitask learning has received plenty of attention in the machine learning community. More and more researchers cast multitask learning as multiobjective optimization, and then effective algorithms \citep[see e.g.][]{SK2018,LZ2019,MR2020} are devised based on SDMO to train the model. However, \citet{CB2018} pointed out that: \textit{``Task
	imbalances impede proper training because they manifest as imbalances between backpropagated gradients."} Fortunately, BBDMO is a first-order method that can handle the imbalances among objective functions effectively and thus a potential method for multitask learning as multiobjective optimization. It is worth noting that imbalanced objective functions are commonplace in large-scale MOPs. Hence, it is meaningful to apply BBDMO to solve large-scale MOPs, which is left as our future work.\\

\biboptions{authoryear}
\bibliography{mybibfile}
\end{document}